\documentclass[11pt]{amsart}  

\usepackage[latin1]{inputenc}
\usepackage{amsmath}
\usepackage{amsfonts}
\usepackage{amssymb}
\usepackage{stmaryrd}
\usepackage{latexsym}
\usepackage{graphicx}
\usepackage{subfigure}
\usepackage{color}
\usepackage{mathtools}
\usepackage{verbatim}
\usepackage[all]{xy}
\usepackage{graphics}
\usepackage{pdfsync}
\usepackage[backref=page]{hyperref}
\usepackage{ytableau}
\usepackage{tikz}
\usepackage[normalem]{ulem}

\usepackage{enumitem}
\oddsidemargin=0in
\evensidemargin=0in
\textwidth=6.50in             

\headheight=10pt
\headsep=10pt
\topmargin=.5in
\textheight=8in

\theoremstyle{plain}
\newtheorem{theorem}{Theorem}[section]
\newtheorem{proposition}[theorem]{Proposition}

\newtheorem{lemma}[theorem]{Lemma}

\theoremstyle{definition}
\newtheorem{definition}[theorem]{Definition}

\theoremstyle{remark}
\newtheorem{remark}[theorem]{Remark}
\newtheorem{example}[theorem]{Example}

\numberwithin{equation}{section}

\newcommand{\bC}{\mathbb{C}}
\newcommand{\bN}{\mathbb{N}}

\newcommand{\bR}{\mathbb{R}}
\newcommand{\bZ}{\mathbb{Z}}



\newcommand{\tcr}[1]{\textcolor{red}{#1}}
\newcommand{\tcb}[1]{\textcolor{blue}{#1}}




\newcommand{\sgrp}{\mathbb{S}}



\newcommand{\suchthat}{\;|\;}


\newlength\cellsize \setlength\cellsize{15\unitlength}
\savebox2{%
\begin{picture}(15,15)
\put(0,0){\line(1,0){15}}
\put(0,0){\line(0,1){15}}
\put(15,0){\line(0,1){15}}
\put(0,15){\line(1,0){15}}
\end{picture}}
\newcommand\cellify[1]{\def\thearg{#1}\def\nothing{}%
\ifx\thearg\nothing
\vrule width0pt height\cellsize depth0pt\else
\hbox to 0pt{\usebox2\hss}\fi%
\vbox to 15\unitlength{
\vss
\hbox to 15\unitlength{\hss$#1$\hss}
\vss}}
\newcommand\tableau[1]{\vtop{\let\\=\cr
\setlength\baselineskip{-16000pt}
\setlength\lineskiplimit{16000pt}
\setlength\lineskip{0pt}
\halign{&\cellify{##}\cr#1\crcr}}}
\savebox3{%
\begin{picture}(15,15)
\put(0,0){\line(1,0){15}}
\put(0,0){\line(0,1){15}}
\put(15,0){\line(0,1){15}}
\put(0,15){\line(1,0){15}}
\end{picture}}
\newcommand\expath[1]{%
\hbox to 0pt{\usebox3\hss}%
\vbox to 15\unitlength{
\vss
\hbox to 15\unitlength{\hss$#1$\hss}
\vss}}
\newcommand\bas[1]{\omit \vbox to \cellsize{ \vss \hbox to \cellsize{\hss$#1$\hss} \vss}}



\usepackage{tikz}
\usetikzlibrary{arrows,petri,topaths}
\usepackage{xcolor}
\usepackage{tikz-qtree}
\usetikzlibrary{positioning,arrows,calc,intersections,shapes,decorations}



\newcommand{\dsc}{\mathrm{des}} 





\newcommand{\ds}[1]{\big\langle {#1}\big\rangle}


\newcommand{\wc}[2]{\mathcal{W}_{#1}^{#2}}



\newcommand{\qint}[1]{(#1)_q} 
\newcommand{\qfact}[1]{(#1)_q!} 
\newcommand{\qbin}[2]{\genfrac(){0pt}{}{#1}{#2}_q}

\newcommand{\schub}[1]{\mathfrak{S}_{#1}}
\newcommand{\prob}{\mathbb{P}} 
\newcommand{\mca}[1]{\mathcal{#1}} 
\newcommand{\mcbk}{\mca{B}} 
\newcommand{\face}[1]{\mathsf{F}(#1)} 
\newcommand{\setfd}{\mathrm{FD}}

\newcommand{\inv}{\mathrm{inv}} 
\newcommand{\maj}{\mathrm{maj}} 

\newcommand{\rev}[1]{\mathrm{rev}(#1)} 
\newcommand{\mc}[1]{\mathcal{#1}} 
\newcommand{\val}{\mathrm{val}} 
\newcommand{\kly}{\mc{K}}
\newcommand{\supp}{\mathrm{Supp}}

\newcommand{\Perm}{\mathrm{Perm}}
\newcommand{\vol}{\mathrm{vol}}

\newcommand{\gz}{\mathrm{GT}} 
\newcommand{\mbf}[1]{\mathbf{#1}} 

\newcommand{\cube}{\mathbf{C}} 
\newcommand{\code}{\mathrm{code}} 

\newcommand{\psu}{\mathrm{psu}}

\begin{document}

\title{Remixed Eulerian numbers}
\author{Philippe Nadeau}
\address{Univ Lyon, Universit\'e Claude Bernard Lyon 1, CNRS UMR
5208, Institut Camille Jordan, 43 blvd. du 11 novembre 1918, F-69622 Villeurbanne cedex, France}
\email{\href{mailto:nadeau@math.univ-lyon1.fr}{nadeau@math.univ-lyon1.fr}}

\author{Vasu Tewari}
\address{Department of Mathematics, University of Hawaii at Manoa, Honolulu, HI 96822, USA}
\email{\href{mailto:vvtewari@math.hawaii.edu}{vvtewari@math.hawaii.edu}}
\thanks{P.~N is partially supported by the project ANR19-CE48-011-01. V.~T. acknowledges the support from Simons Collaboration Grant \#855592.}

\begin{abstract}
Remixed Eulerian numbers are a polynomial $q$-deformation of Postnikov's mixed Eulerian numbers. They arose naturally in previous work by the authors concerning the permutahedral variety and subsume well-known families of polynomials such as $q$-binomial coefficients and Garsia--Remmel's $q$-hit numbers.   
We study their combinatorics in more depth. As polynomials in $q$, they are shown to be symmetric and unimodal. By interpreting them as computing success probabilities in a simple probabilistic process we arrive at a combinatorial interpretation involving weighted trees. 
By decomposing the permutahedron into certain combinatorial cubes, we obtain a second combinatorial interpretation. At $q=1$, the former recovers Postnikov's interpretation whereas the latter recovers Liu's interpretation, both of which were obtained via methods different from ours.

\end{abstract}

\maketitle

\section{Introduction}
\label{sec:intro}

This article studies a large family of polynomials, the  \emph{re}mixed Eulerian numbers, which were introduced in a previous work of the authors~\cite{NT21}. 
The terminology follows that of Postnikov~\cite[\S 16]{Pos09} where mixed Eulerian numbers were introduced. We first recall their original geometric definition.

Throughout this article, $r$ is a positive integer. Consider real numbers $\lambda_1\geq \cdots \geq \lambda_{r+1}$ and let $\lambda\coloneqq (\lambda_1,\dots,\lambda_{r+1})$. The \emph{permutahedron} $\Perm(\lambda)$ is the convex hull of the points $\lambda_{\sigma}\coloneqq (\lambda_{\sigma(1)},\ldots,\lambda_{\sigma(r+1)})\in\bR^{r+1}$ where $\sigma$ is a permutation in the symmetric group $\sgrp_{r+1}$. It sits in the hyperplane $\{(z_i)\in \bR^{r+1}\suchthat z_1+\cdots+z_{r+1}=\lambda_1+\cdots+\lambda_{r+1}\}$. After projecting it to $\bR^r\times \{0\}$, one can compute its volume $\vol(\Perm(\lambda))$.
The latter is known to be a polynomial in the differences $\mu_i\coloneqq\lambda_i-\lambda_{i+1}$, homogeneous of degree $r$. 
 It can thus be written as
\begin{align}
\label{eq:volume_perm_mu_expansion}
\vol(\Perm(\lambda))=\sum_{{c}=(c_1,\ldots,c_r)}A_{c}\;\frac{\mu_1^{c_1}\cdots \mu_r^{c_r}}{c_1!\cdots c_r!}
\end{align}
with ${c}$ in $\wc{r}{}\coloneqq\{(c_1,\dots,c_r)\suchthat c_1+\cdots + c_r=r\}$.

\begin{definition}[{\cite[\S 16]{Pos09}}]
For $c\in\wc{r}{}$, $A_c$ is called a \emph{mixed Eulerian number}.
\end{definition}

 We now recall the definition of \emph{remixed Eulerian numbers} $A_c(q)$ introduced in \cite[\S 4.3]{NT21}, where it is also pointed out why $A_c(1)=A_c$.\smallskip
 
 Let  $\sgrp_{r+1}$ act on $\bC[q,x_1,\dots,x_{r+1}]$ by permuting the indices of the indeterminates $x_1$ through $x_{r+1}$. Consider the operator $\partial_{w_o}$ that acts on polynomials $f\in \bC[q,x_1,\dots,x_{r+1}]$ as
\begin{align}
\label{eq:max_divided_difference}
\partial_{w_o}(f)=\frac{1}{\prod_{1\leq i<j\leq r+1}(x_i-x_j)}\sum_{\sigma\in\sgrp_{r+1}}\epsilon(\sigma)\sigma(f),
\end{align}
where $\epsilon(\sigma)$ is the \emph{sign} of $\sigma$. Then $\partial_{w_o}(f)$ is a symmetric polynomial in $x_1,\ldots,x_{r+1}$. If $f$ is homogeneous of degree $d$ in $x_1,\ldots,x_{r+1}$, then $\partial_{w_o}(f)$ vanishes if $d<\binom{r+1}{2}$ and has degree $d-\binom{r+1}{2}$ in $x_1,\ldots,x_{r+1}$ otherwise. 

Given $f\in \bC[q,x_1,\dots,x_{r+1}]$, define the \emph{$q$-divided symmetrization operator} by
\begin{align}
\label{eq:qds}
\ds{f}_{r+1}^q=\partial_{w_o}\left(f\prod_{1\leq i<j-1\leq r}(qx_i-x_j)\right).
\end{align}
Now assume $f$ has total degree $r$ in $x_1,\ldots,x_{r+1}$. Then $f\prod_{1\leq i<j-1\leq r+1}(qx_i-x_j)$ has degree $\binom{r+1}{2}$, and thus $\ds{f}_{r+1}^q$ is a polynomial in $\bC[q]$ by the property of $\partial_{w_o}$ recalled above. In particular, for $c\in \wc{r}{}$ we consider the degree $r$ polynomial $y_c$ defined as:
\begin{align}
\label{eq:def_yc}
 y_c=x_1^{c_1}(x_1+x_2)^{c_2}\dots (x_1+\dots+x_r)^{c_r}.
\end{align}

\begin{definition}[Remixed Eulerian number $A_c(q)$]
\label{defi:remixed}
For $c\in \wc{r}{}$, the \emph{remixed Eulerian number} $A_{c}(q)\in \bC[q]$ is defined as
\begin{align*}
A_{c}(q)=\ds{y_c}_{r+1}^q.
\end{align*}
\end{definition}
The polynomials $A_{c}(q)$ may be equivalently defined in a number of other ways; see Section~\ref{sec:alternative_definitions}. 

\smallskip

Several properties of mixed Eulerian numbers $A_c$ were given by Postnikov in a long list~\cite[Theorem 16.3]{Pos09} that exhibits the rich combinatorics attached to them. The theorem was reproduced by Liu~\cite[Theorem 4.1]{Liu16} who used his combinatorial interpretation of $A_c$ to reprove several items on the list (and add some more). 

The next theorem shows that all of these properties of Postnikov's theorem $q$-deform nicely to $A_c(q)$. We have kept exactly the order in which Postnikov stated the properties in his statement. Where necessary, we appeal to standard notation (as can be found in \cite{St97,St99} for instance) for various permutation statistics.

\begin{theorem}
\label{th:q_list}
Let $c=(c_1,\dots,c_{r})\in \wc{r}{}$.
\begin{enumerate}[label=$(\arabic*)$]
\item \label{it1}$A_{c}(q)$ is a polynomial in $q$ with nonnegative integer coefficients.

\item \label{it2} $A_{(c_1,\dots,c_{r})}(q)=q^{\binom{r}{2}}A_{(c_r,\dots,c_{1})}(q^{-1})$.

\item \label{it3} $A_{(\ldots,0,r,0,\ldots)}(q)$ with $r$  in the $i$th position equals
\[
\sum_{\substack{\sigma\in \sgrp_r\\ \dsc(\sigma)=i-1}}q^{\maj(\sigma)}.
\]

\item \label{it4} $\sum_{c\in \wc{r}{}}\frac{A_{c_1,\dots,c_{r}}(q)}{c_1!\cdots c_r!}=\frac{\qfact{r}}{r!}(r+1)^{r-1}$.

\item \label{it5} $\sum_{c\in \wc{r}{}}A_{c_1,\dots,c_{r}}(q)=\qfact{r}\,\mathrm{Cat}_r$ where $\mathrm{Cat}_r=\frac{1}{r+1}\binom{2r}{r}$ is the $r$th Catalan number.

\item\label{it6} $A_{(\ldots,0,k,r-k,0,\ldots)}(q)$, with $k$ in $i$th position, equals
\[
\sum_{\substack{\sigma\in \sgrp_{r+1},\dsc(\sigma)=i\\ \sigma(r+1)=r+1-k}}q^{\maj(\sigma)-k}.
\]

\item \label{it7} $A_{1,\dots,1}(q)=\qfact{r}$.

\item \label{it8} $A_{k,0,\dots,0,r-k}(q)=q^{\binom{k}{2}} \qbin{r}{k}$.

\item \label{it9} Assume that $c$ satisfies $\sum_{i\leq j} c_i\geq j$ for $j=1,2,\dots,r$. Then one has 
\[A_c(q)=\qint{1}^{c_1}\qint{2}^{c_2}\cdots\qint{r}^{c_r}.\]
\end{enumerate}
\end{theorem}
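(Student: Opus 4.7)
The plan is to handle the nine items with a mix of three techniques: direct computation from Definition~\ref{defi:remixed}, generating-function manipulations summing over all $c\in\wc{r}{}$, and appeal to the combinatorial interpretations to be developed later in the paper.

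I would begin with the palindromic symmetry~(2) because the argument is short and introduces machinery useful elsewhere. Consider the joint substitution $\Phi\colon x_i\mapsto x_{r+2-i}$, $q\mapsto q^{-1}$. A direct calculation gives
\[
\Phi\!\left(\prod_{1\le i<j-1\le r}(qx_i-x_j)\right)=(-q)^{-\binom{r}{2}}\prod_{1\le i<j-1\le r}(qx_i-x_j),
\]
while $\Phi(y_c)=w_o(y_c)$ reduces to $(-1)^r y_{c^{\mathrm{rev}}}$ modulo the ideal generated by $x_1+\cdots+x_{r+1}$ (set $x_{r+1}=-(x_1+\cdots+x_r)$). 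Since $\partial_{w_o}$ annihilates multiples of $x_1+\cdots+x_{r+1}$ of total degree $\binom{r+1}{2}$, and since $\partial_{w_o}(w_o g)=\epsilon(w_o)\partial_{w_o}(g)$, tracking signs through these identities yields~(2).

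For the shape-specific items~(7),~(8), and~(9), I would compute $A_c(q)$ directly from the definition, possibly via the alternative formulas in Section~\ref{sec:alternative_definitions}. In~(7) the staircase $y_{(1,\dots,1)}=x_1(x_1+x_2)\cdots(x_1+\cdots+x_r)$ is well suited to a telescoping evaluation giving $(r)_q!$. In~(8) one expands $(x_1+\cdots+x_r)^{r-k}$ multinomially and argues that only the terms indexed by $k$-subsets of $\{1,\dots,r\}$ (after accounting for the prescribed $x_1^k$ factor) survive the divided symmetrization, reconstructing the $q$-binomial with the expected prefactor $q^{\binom{k}{2}}$. In~(9) the dominance hypothesis enables induction on $r$ by peeling off the last factor. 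The Eulerian items~(3) and~(6) reduce to computing $\partial_{w_o}$ on $(x_1+\cdots+x_i)^r$ and $(x_1+\cdots+x_i)^k(x_1+\cdots+x_{i+1})^{r-k}$ respectively against the $q$-deformation factor, then identifying the result as a $(\dsc,\maj)$-generating function via a bijection between monomials surviving symmetrization and permutations in $\sgrp_r$, respectively $\sgrp_{r+1}$ with a prescribed last value.

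The summation identities~(4) and~(5) I would obtain by applying $\ds{\cdot}_{r+1}^q$ to the generating series
\[
\sum_{c\in\wc{r}{}}\frac{y_c}{c_1!\cdots c_r!}=\frac{(rx_1+(r-1)x_2+\cdots+x_r)^r}{r!}\qquad\text{and}\qquad\sum_{c\in\wc{r}{}}y_c=[z^r]\prod_{k=1}^r\frac{1}{1-(x_1+\cdots+x_k)z},
\]
and evaluating the resulting symmetrizations to recover $(r)_q!(r+1)^{r-1}/r!$ and $(r)_q!\mathrm{Cat}_r$. Finally, nonnegativity~(1) I would defer until the weighted-tree or permutahedron-cube interpretation has been established later in the paper; each interpretation expresses $A_c(q)$ manifestly as a sum over combinatorial objects with nonnegative integer $q$-weights. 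I expect~(1) to be the main obstacle: the definition does not make positivity transparent, and either combinatorial interpretation requires substantial structural setup to justify.
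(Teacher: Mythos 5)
Your treatment of item~(2) is correct and genuinely different from the paper's: the paper deduces it from the probabilistic interpretation (reverse the lattice of sites and swap $q_L$ and $q_R$; Lemma~\ref{lem:reversal symmetry}), whereas your substitution $x_i\mapsto x_{r+2-i}$, $q\mapsto q^{-1}$ applied to $y_c\prod(qx_i-x_j)$, combined with $\partial_{w_o}(w_og)=\epsilon(w_o)\partial_{w_o}(g)$ and reduction modulo $x_1+\cdots+x_{r+1}$, is a clean purely algebraic alternative; the signs do cancel since $\binom{r+1}{2}+\binom{r}{2}+r$ is even. Deferring~(1) to a combinatorial interpretation is also legitimate, but only the bilabeled-tree interpretation (Theorem~\ref{th:remixed via trees}) is \emph{manifestly} nonnegative; the Postnikov-tree weights are a priori rational functions of $q$, and showing each summand $\qfact{r}\,\mathrm{wt}(T,\mathbf{i})$ lies in $\bN[q]$ requires a separate argument ($q$-ifying Postnikov's Lemma~17.5). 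The paper itself simply cites the recurrence-based positivity proof from~\cite{NT21}.

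The genuine gaps are in items~(3), (5), (6), (8), (9), where in each case the step you describe as ``identifying'' or ``arguing which terms survive'' is precisely where all the difficulty lives, and no mechanism is offered. The $q$-divided symmetrization of an individual monomial or of $(x_1+\cdots+x_i)^k(x_1+\cdots+x_{i+1})^{r-k}$ is not a tractable object term by term, and the whole point of the machinery in Section~\ref{sec:alternative_definitions} is to avoid such direct computations. Concretely: the paper obtains (8) and (9) as special cases of the product formula for the family $EB_r$ (Proposition~\ref{prop:product_formula}), proved by running the sequential particle process with the word $\mathbf{i}=1^{c_1}\cdots k^{c_k}r^{c_r}\cdots(k+1)^{c_{k+1}}$ and multiplying exit probabilities; (5) comes from the cyclic sum rule $\sum_{c'\in\mathsf{Cyc}(c)}A_{c'}(q)=\qfact{r}$ summed over the $\mathrm{Cat}_r$ cyclic classes, which is far more tractable than symmetrizing $[z^r]\prod_k(1-(x_1+\cdots+x_k)z)^{-1}$; (3) and (6) come from the generating-function identity~\eqref{eq:gf_connected} together with Carlitz's $q$-Eulerian identity and the $q$-hit number interpretation, not from a bijection on ``surviving monomials.'' For (4) your generating series $(rx_1+(r-1)x_2+\cdots+x_r)^r/r!$ is the right starting point, but evaluating its $q$-divided symmetrization is exactly the computation of $V^q$ of the standard permutahedron, which the paper carries out via the dissection into the $r!$ equal-volume cubes $\cube_\lambda(u)$ weighted by $q^{\ell(u)}$; without that (or an equivalent) input, the evaluation does not go through.
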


Some of these properties were already given in~\cite{NT21}, while the others will be proved at various locations in the text: \ref{it1} is \cite[Proposition~5.4]{NT21}. \ref{it2} is Lemma~\ref{lem:reversal symmetry}. \ref{it3}, as well as \ref{it6}, \ref{it8} and \ref{it9}, are treated in Section~\ref{sec:examples}. For \ref{it4}, we refer to Section~\ref{sec:volumes}. \ref{it5} follows from the sum rule \cite[Proposition~5.4]{NT21}, as explained in Remark~\ref{rem:cyclic}. Finally, \ref{it7} is part of~\cite[Theorem 4.8]{NT21}.
\smallskip

Combinatorial interpretations for the case $q=1$ have been given in previous works: in~\cite[\S 17]{Pos09} Postnikov defined certain weighted trees to give a combinatorial interpretation for $A_c$. Another interpretation was given by Liu \cite{Liu16}, in terms of $C$-compatible permutations. As we will argue in this work, such permutations can be naturally seen as bilabeled trees with leaf labels $1,c_1+2,c_1+c_2+3,\dots,c_1+\cdots+c_r+r+1=2r+1$. Both these interpretations come from finding functional equations for the volume polynomial \eqref{eq:volume_perm_mu_expansion} and extracting coefficients.
  
  We will refine both these combinatorial interpretations by interpreting the powers of $q$ in each, by fairly different methods. These are stated in terms of certain families of trees that are described in detail in Sections~\ref{sec:postnikov_trees} and~\ref{sec:volumes}.
\begin{itemize}
\item  $A_{c}(q)$ is the total weight of all ``Postnikov trees'' with a fixed associated sequence $\mbf{i}$ of content $c$ (Theorem \ref{th:ac_via_postnikov_trees}).
\item $A_{c}(q)$ is the total weight of all ``bilabeled trees'' with leaf labels $1,c_1+2,c_1+c_2+3,\dots,c_1+\cdots+c_r+r+1=2r+1$ (Theorem \ref{th:remixed via trees}). 
\end{itemize}

\subsection*{Outline of the article}
We recall some alternative definitions of $A_c(q)$ in \S\ref{sec:alternative_definitions}; these have already appeared in \cite{NT21}. The last one is probabilistic in nature, and is used in \S\ref{sec:postnikov_trees} to give a first combinatorial interpretation of $A_c(q)$.
In \S\ref{sec:examples} we distinguish two large subfamilies of indices $c$ that give particularly nice polynomials $A_c(q)$.
In \S\ref{sec:degree} we show that the sequence of coefficients of $A_c(q)$ is always symmetric and unimodal. This requires the determination of the degree and valuation of $A_c(q)$.  
In \S\ref{sec:volumes} we link $A_c(q)$ with the original geometry for $q=1$, as the parameter $q$ is interpreted via a certain cubical dissection of the permutahedron. From this a second combinatorial interpretation of $A_c(q)$ follows; see \S\ref{sec:A_c bilabeled}.

\section{Alternative definitions of remixed Eulerian numbers}
\label{sec:alternative_definitions}

For $n\in \mathbb{Z}_{\geq 0}$ and $0\leq k\leq n$, set
\begin{align}\label{eq:q-defs}
\qint{n}\coloneqq \frac{q^n-1}{q-1}=1+q+\cdots+q^{n-1};\quad
\qfact{n}\coloneqq \prod_{1\leq i\leq n} \qint{i};\quad \qbin{n}{k}\coloneqq\frac{\qfact{n}}{\qfact{k}\qfact{n-k}}.
\end{align}
These are the $q$-integers, $q$-factorials and $q$-binomial coefficients. Given integers $a\leq b$, we denote the interval $\{a,a+1,\dots,b\}$ by $[a,b]$. If $a=1$, we often shorten this to $[b]$.
For any undefined combinatorial terminology, we refer the reader to standard texts such as \cite{St97,St99}.

In contrast to the computational perspective provided in Definition~\ref{defi:remixed}, we offer three more perspectives that may be treated as alternative definitions. 

The reader will note the similarity between the perspectives that follow: they are indeed easily seen to be equivalent. In contrast, the fact that any of these definitions is equivalent to Definition~\ref{defi:remixed} is not  obvious, and this was a key result in~\cite{NT21}.

\subsection{Coefficients in the Klyachko algebra}
\label{sub:coeffs_klyachko}
 
The \emph{$q$-Klyachko algebra} $\kly$ is the commutative algebra over $\bC(q)$ on the generators $\{u_i\suchthat i\in \mathbb{Z}\}$ subject to the following relations for all $i\in\bZ$:
\begin{align*}
(q+1)u_i^2=qu_iu_{i-1}+u_iu_{i+1}.
\end{align*}
For a finite subset $I\subset \bZ$, let $u_I\coloneqq \prod_{i\in I}u_i$. By \cite[Proposition 3.9]{NT21}, the set $\mcbk$ of such squarefree monomials forms a linear basis for $\kly$. Given $c\in\mca{W}_r$, we have
\begin{align}
\label{eq:alg perspective}
A_c(q)=\qfact{r}\times\text{ coefficient of } u_{[r]} \text{ in the expansion of } u_1^{c_1}\dots u_r^{c_r} \text{ in } \mcbk.
\end{align}

\subsection{Recurrence relation}
\label{sub:recurrence_relation}

The remixed Eulerian number $A_c(q)$ for  $c=(c_1,\dots,c_r)\in \wc{r}{}$ is the unique polynomial satisfying the initial condition $A_{(1^{r})}(q)=\qfact{r}$ and the relation
\begin{equation}
\label{eq:relation_A}
(q+1)A_c(q)=q A_{(\dots,c_{i-1}+1,c_{i}-1,\dots)}(q)+A_{(\dots,c_{i}-1,c_{i+1}+1,\dots)}(q) \text{ for any } i \text{ satisfying } c_i\geq 2.
\end{equation}
On the right hand side we ignore the ill-defined terms in the case $i=1$ or $i=r$.

A more efficient recurrence based approach is as follows. The initial condition continues to be $A_{(1^r)}(q)=\qfact{r}$.
Otherwise, consider $i$  such that $c_i\geq 2$.
Let $\supp(c)$ denote the \emph{support} of $c$, i.e. the set of  all indices $j$ such that $c_j>0$.
 Let $[a,b]$ be the maximal interval in $\supp(c)$ containing $i$.  
We let $L_i(c)$ (resp. $ R_i(c)$) denote the composition obtained by decrementing $c_i$ by $1$ and incrementing $c_{a-1}$ (resp. $c_{b+1}$) by $1$. Then we have
 \begin{equation}
\label{eq:recurrence_A}
\qint{b-a+2}A_{c}(q)=q^{i-a+1}\qint{b-i+1}A_{L_i(c)}(q)+\qint{i-a+1}A_{R_i(c)}(q).
\end{equation}
Again, if $a=1$ (resp. $b=r$) then $L_i(c)$  (resp. $R_i(c)$) is not well defined, and the corresponding remixed Eulerians are $0$.

\subsection{Probabilistic interpretation}
\label{sub:proba_process}

 Consider the integer line $\bZ$ as a set of \emph{sites}.
A \emph{configuration} is an $\bN$-vector $c=(c_i)_{i\in \bZ}$ with $\sum_ic_i<\infty$, which we visualize as a finite set of particles with $c_i$ particles stacked at site $i$. \emph{Stable} configurations are those for which $c_i\leq 1$ for all $i$. They are identified with finite subsets of $\bZ$ via their support.

Given \emph{jump probabilities} $q_L,q_R\geq 0$ satisfying $q_L+q_R=1$, consider the following process with state space the set of configurations: Suppose we are in a configuration $c$. If $c$ is stable, the process stops. Otherwise, pick any $i$ such that $c_i>1$ and move the top particle at site $i$ to the top of site $i-1$  (resp. $i+1$) with probability $q_L$  (resp. $q_R$). 
The process ends in a stable configuration with probability $1$. Furthermore, it is known that the probability of ending in a particular stable configuration does not depend  on the choice of site $i$ at each step.

For a finite subset $I\subset\bZ$, we can then define $\prob_c(I)$ to be the probability that the process starting at configuration $c$ ends in the stable configuration given by $I$. Assume $q_R>0$ and let $q\coloneqq q_L/q_R$.
By \cite[Proposition 5.1]{NT21}, for $c\in \wc{r}{}$ we have
\begin{equation}
\label{eq:Ac_proba}
\prob_c([r])=\frac{A_c(q)}{\qfact{r}}.
\end{equation}

Note that we have then $q_L=\frac{q}{1+q}$ and $q_R=\frac{1}{1+q}$.

\begin{remark}
\label{rem:cyclic}
Using this interpretation one has the following ``cyclic sum rule''~\cite[Proposition 5.3]{NT21}: Given $c=(c_1,\dots,c_{r})\in \wc{r}{}$, let $\mathsf{Cyc}(c)$ be the set of all $c'\in \wc{r}{}$ such that $(c',0)$ is a cyclic rotation of $(c,0)$. Then
\begin{align}
\label{eq:sum_rule}
\sum_{c'\in \mathsf{Cyc}(c)}A_{c'}(q)=\qfact{r}.
\end{align}
There are $\mathrm{Cat}_r$ sets of the form $\mathsf{Cyc}(c)$, so that summing \eqref{eq:sum_rule} over all of them proves Theorem~\ref{th:q_list}\ref{it8}.
\end{remark}

We record a slightly different way to think about the previous process as it will be particularly helpful.\smallskip

\textbf{Sequential process:}  
Fix any word $\mathbf{i}=(i_1,\ldots,i_r)\in [r]^r$.  
Starting with the empty configuration, drop particles one at a time at sites $i_1,i_{2},\ldots, i_r$, \emph{and stabilize at every step}. 
Each such step involves the particle either landing on an interval of occupied sites in the current stable configuration, and then proceeding to exit either to the left or to the right, or landing on an unoccupied site in which case it stabilizes immediately. We denote by $\prob^{\mathbf{i}}(I)$ the probability to end up with the stable set $I$.

 Let $\mathrm{cont}(\mathbf{i})=(c_1,\dots,c_r)\in \wc{r}{}$ where $c_j$ is the the number of instances of $j$ in $\mathbf{i}$ for $j\in [r]$. Then we have 
 \begin{equation}\prob^{\mathbf{i}}(I)=\prob_{\mathrm{cont}(\mathbf{i})}(I)\end{equation}
  and thus by~\eqref{eq:Ac_proba},
\begin{equation}
\label{eq:Ac_probabis}
\prob^{\mathbf{i}}([r])=\frac{A_{\mathrm{cont}(\mathbf{i})}(q)}{\qfact{r}}.
\end{equation}

\section{Combinatorial interpretation via Postnikov trees}
\label{sec:postnikov_trees}

Postnikov showed that mixed Eulerian numbers $A_c$ enumerate a certain family of trees\cite{Pos09}.  His proof uses an equation for the volume of the permutahedron $\vol(\Perm(\lambda))$, proved purely geometrically (and valid for any ``root system''). Using the expansion~\eqref{eq:volume_perm_mu_expansion} and differentiation, he then obtains a combinatorial interpretation of the numbers $A_c$ as enumerating certain weighted trees. Here we will define a $q$-deformation of these weights, based on the probability process, that turns out to give a combinatorial interpretation of $A_c(q)$. Postnikov's trees can then be reinterpreted naturally as recording possible  \emph{histories} of the process.

\subsection{A recursive formula for $\prob^{\mathbf{i}}([a,b])$}

Consider $\mathbf{i}=i_1\cdots i_r$ and let us compute $\prob^{\mathbf{i}}([a,b])$, where we assume that $b-a+1=r$ and $i_j\in [a,b]$ for all $j$ since $\prob^{\mathbf{i}}([a,b])=0$ otherwise. 

Let us condition on the stable set obtained just before dropping the last particle $i_r$. This set is necessarily of the form $[a,b]\setminus\{j\}=[a,j-1]\sqcup [j+1,b]$ for $j\in[a,b]$ in order to have $\prob^{\mathbf{i}}([a,b])\neq 0$. We thus get
\begin{equation}
\label{eq:proba_conditioning}
\prob^{\mathbf{i}}([a,b])=\sum_{j=1}^r\prob^{i_1\cdots i_{r-1}}([a,b]\setminus\{j\})\,\prob^{i_r}([a,b],j),
\end{equation}
 where $\prob^i([a,b],j)$ is the probability to reach the stable set $[a,b]$ from $[a,b]\setminus\{j\}$ after dropping a particle at the site $i\in [a,b]$ and stabilizing.
 
The probability $\prob^{i_1\cdots i_{r-1}}([a,b]\setminus\{j\})$ is clearly zero unless \begin{enumerate}[label=(*)]\item \label{asterisk} there are $j-a$ indices $t$ such that $i_t\in[a,j-1]$ and $b-j$ indices $t$ such that $i_t\in[j+1,b]$. 
\end{enumerate}
Indeed the site $j$ is empty at all times before the last step, so the particles that are dropped on either side of it stay on that side during the process. Assuming~\ref{asterisk} is satisfied, let $\mathbf{i'},\mathbf{i''}$ be the two subsequences of $\mathbf{i}$ consisting of $i_t<j$ and $i_t>j$ respectively. Then we have
 \begin{align}
 \label{eq:proba_conditioning_bis}
 \prob^{i_1\cdots i_{r-1}}([a,b]\setminus\{j\})=\prob^{\mathbf{i'}}([a,j-1])\;\prob^{\mathbf{i''}}([j+1,b]).
 \end{align}
 
The other factor $\prob^{i_r}([a,b],j)$ in~\eqref{eq:proba_conditioning} is also straightforward to compute: if $i_r>j$, then the particle must exit to the left of the interval $[j+1,b]$ while if $i_r\leq j$, it must exit to the right of the interval $[a,j-1]$. These are the well-known exit probabilities of a biased discrete random walk on an interval. We thus obtain explicitly $\prob^{i_r}([a,b],j)=\mathrm{wt}_q([a,b],j,i_r)$ where for any $i\in [a,b]$,
\begin{align}
\label{eq:proba_conditioning_ter}
\mathrm{wt}_q([a,b],j,i)=\left\lbrace \begin{array}{ll}\frac{\qint{b-i+1}}{\qint{b-j+1}}q^{(i-j)} & i > j, \\ 
\frac{\qint{i-a+1}}{\qint{j-a+1}} & i \leq j, \end{array}\right.
\end{align}
The reader may recognize these as $q$-deformations of the weights $\mathrm{wt}(i,j)$ in \cite[Equation 17.1]{Pos09}, which can therefore be interpreted as exit probabilities in the symmetric case $q=1$.

Substituting \eqref{eq:proba_conditioning_bis},\eqref{eq:proba_conditioning_ter} in \eqref{eq:proba_conditioning} gives a recursive way to compute $\prob^{\mathbf{i}}([a,b])$:
\begin{align}
\label{eq:proba_conditioning_recursive}
\prob^{\mathbf{i}}([a,b])=\sum_{j\text{ s.t.~\ref{asterisk} holds}}\prob^{\mathbf{i'}}([a,j-1])\,\prob^{\mathbf{i''}}([j+1,b])\,\mathrm{wt}_q([a,b],j,i_r).
\end{align}
The initial condition is simply $\prob^{\epsilon}(\emptyset)=1$ where $\epsilon$ is the empty word.

\subsection{Postnikov trees}

Recall that the \emph{binary search labeling} of a binary tree $T$ with $r$ nodes is the canonical, standard labeling of nodes with $[r]$ given recursively by traversing the left subtree first, then the root, then the right subtree. This is illustrated in Figure~\ref{fig:postnikov_tree} by the labels inside the nodes. Let the \emph{bs-label} of a node be this label $bs(v)\coloneqq j\in[r]$.

Given a node $v$ in $T$, let $[l_v,r_v]$ refer to the set of bs-labels of its descendants in $T$. A coloring $f:\mathrm{Nodes}(T)\mapsto \bZ$ is {\em compatible} if for all $v$, we have $f(v)\in [l_v,r_v]$. The weight $\mathrm{wt}(T,f)$ of such a tree is then defined as \[\mathrm{wt}(T,f)=\prod_{v\in T}\mathrm{wt}_q([l_v, r_v],v,f(v)).\]

We call a labeled tree \emph{decreasing} if it has a standard labeling such that the label of a node is larger than the labels of all its descendants. On the leftmost panel in Figure~\ref{fig:postnikov_tree}, the exterior labels (in blue) give a decreasing labeling of the underlying tree.

\begin{definition}
Given $\mathbf{i}=i_1\cdots i_r\in [r]^r$, a tree $T$ is \emph{$\mathbf{i}$-compatible} if it has a (necessarily unique) decreasing labeling such that $v\mapsto i_{\mathrm{dec}(v)}$ is a compatible labeling. The weight $\mathrm{wt}(T,\mathbf{i})$ is the weight of  this compatible labeling.
\end{definition}
\noindent For instance, the tree $T$ in Figure~\ref{fig:postnikov_tree} is $\mbf{i}$-compatible where $\mbf{i}=34717843$. Furthermore we have 
\[
\mathrm{wt}(T,\mbf{i})=\frac{\qint{1}
}{\qint{1}}\cdot \frac{\qint{1}
}{\qint{1}} \cdot \frac{\qint{1}
}{\qint{1}}\cdot \frac{\qint{1}
}{\qint{1}}\cdot \frac{\qint{1}
}{\qint{2}}\cdot q^2\frac{\qint{1}
}{\qint{3}}\cdot q^2\frac{\qint{1}
}{\qint{3}}\cdot\frac{\qint{3}
}{\qint{5}}=\frac{q^4}{\qint{2}\qint{3}\qint{5}}.
\]
The order of the terms on the right hand side is obtained by considering the weights of nodes $v$ encountered according to the decreasing labeling.

\begin{remark}
Postnikov uses {\em increasing trees} instead of decreasing ones. Our notion of $\mathbf{i}$-compatibility corresponds to $\rev{\mathbf{i}}$-compatibility in his sense. This choice changes nothing of the underlying combinatorics, and was made by us because it matches more naturally with the probabilistic process. 
In addition his trees record the decreasing labeling together with the $\mathbf{i}$-labeling. As we pointed out in the previous definition, given $(T,\mathbf{i})$, such a decreasing labeling is in fact necessarily unique and can thus safely be removed from the definition.
\end{remark}

\begin{figure}[!ht]
\includegraphics[width=0.92\textwidth]{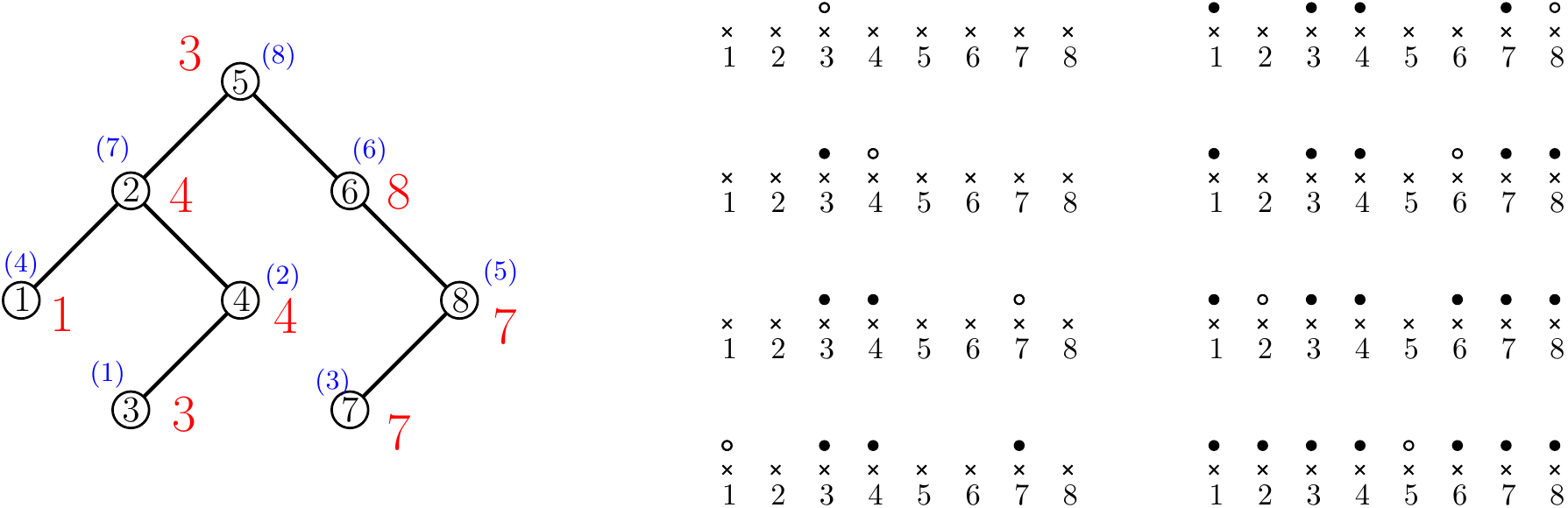}
\caption{With \tcr{$\mathbf{i}=34717843$}, a Postnikov tree is represented on the left: the canonical bs-labeling is pictured inside the nodes, while the unique \tcb{decreasing labeling} is between brackets in blue.\\ The associated history of the sequential process is illustrated in the second and third panels, reading each panel from top to bottom.}
\label{fig:postnikov_tree}
\end{figure}

Let $\mc{P}(\mathbf{i})$ be the set of $\mathbf{i}$-compatible trees. These are referred to as Postnikov trees in the introduction, with content $\mathrm{cont}(\mbf{i})$ determined by the multiplicity of each letter in $\mbf{i}$.

\begin{theorem}
For any $\mathbf{i}\in [r]^r$, we have
\begin{align*}
\prob^{\mathbf{i}}([r])=\sum_{T\in\mc{P}(\mathbf{i})}\mathrm{wt}(T,\mathbf{i}).
\end{align*}
\end{theorem}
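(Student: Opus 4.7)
The plan is to prove by induction on $r$ the strengthened claim that, for every interval $[a,b]\subset \bZ$ with $r=b-a+1$ and every word $\mathbf{i}\in [a,b]^r$,
\[
\prob^{\mathbf{i}}([a,b])=\sum_{T\in\mc{P}^{[a,b]}(\mathbf{i})}\mathrm{wt}(T,\mathbf{i}),
\]
where $\mc{P}^{[a,b]}(\mathbf{i})$ is the natural variant of $\mc{P}(\mathbf{i})$ consisting of binary trees with bs-labels $[a,b]$ admitting a (unique) decreasing labeling making $v\mapsto i_{\mathrm{dec}(v)}$ compatible; the stated theorem is the case $[a,b]=[1,r]$. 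The base case $r=0$ is trivial, with $\prob^{\epsilon}(\emptyset)=1$ matching the empty product over the empty tree.

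For the inductive step I would match the recursion \eqref{eq:proba_conditioning_recursive} term by term with the natural decomposition of $\mc{P}^{[a,b]}(\mathbf{i})$ according to the bs-label $j\in[a,b]$ of the root. Since the decreasing labeling is a bijection $V(T)\to [r]$ respecting the ancestor order, the root is forced to carry dec-label $r$, and hence the letter $i_r$, contributing the factor $\mathrm{wt}_q([a,b],j,i_r)$. Compatibility at every node $v$ in the left subtree $T_L$ forces $i_{\mathrm{dec}(v)}\in [l_v,r_v]\subset [a,j-1]$, and symmetrically for $T_R$; counting sizes shows the set of dec-labels appearing in $T_L$ must be $\{t\in[r-1]:i_t<j\}$, which has cardinality $j-a$ exactly when condition~\ref{asterisk} holds, and likewise for $T_R$. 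After standardizing these dec-label sets to $[j-a]$ and $[b-j]$, the pair $(T_L,T_R)$ becomes an element of $\mc{P}^{[a,j-1]}(\mathbf{i}')\times\mc{P}^{[j+1,b]}(\mathbf{i}'')$ with $\mathbf{i}',\mathbf{i}''$ as in \eqref{eq:proba_conditioning_bis}; conversely any such pair reassembles uniquely into a member of $\mc{P}^{[a,b]}(\mathbf{i})$ with root bs-label $j$. Since each node's weight depends only on its bs-range, its bs-label, and its assigned letter, all invariant under standardization, the weight factors as $\mathrm{wt}(T,\mathbf{i})=\mathrm{wt}_q([a,b],j,i_r)\cdot\mathrm{wt}(T_L,\mathbf{i}')\cdot\mathrm{wt}(T_R,\mathbf{i}'')$.

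Summing first over $(T_L,T_R)$ and then over $j$ satisfying \ref{asterisk} then yields
\[
\sum_{T\in\mc{P}^{[a,b]}(\mathbf{i})}\mathrm{wt}(T,\mathbf{i})=\sum_{j}\mathrm{wt}_q([a,b],j,i_r)\Bigl(\sum_{T_L}\mathrm{wt}(T_L,\mathbf{i}')\Bigr)\Bigl(\sum_{T_R}\mathrm{wt}(T_R,\mathbf{i}'')\Bigr),
\]
so the induction hypothesis identifies the inner sums with $\prob^{\mathbf{i}'}([a,j-1])$ and $\prob^{\mathbf{i}''}([j+1,b])$, and the right-hand side matches \eqref{eq:proba_conditioning_recursive} exactly. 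The only delicate point is verifying that the decomposition $T\leftrightarrow(j,T_L,T_R)$ is a genuine weight-preserving bijection; this rests on the ``necessarily unique'' nature of the decreasing labeling (once the root's dec-label $r$ is fixed, the partition of the remaining dec-labels between subtrees is forced by compatibility), which itself is established by the same inductive argument applied to each subtree.
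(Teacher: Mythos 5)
Your proposal is correct and follows essentially the same route as the paper: both establish the identity by decomposing an $\mathbf{i}$-compatible tree at its root (which necessarily carries dec-label $r$ and hence the letter $i_r$), matching the resulting factorization over left and right subtrees with the recurrence \eqref{eq:proba_conditioning_recursive}, and inducting. Your write-up is somewhat more explicit about the standardization of labels and the weight-preserving bijection $T\leftrightarrow(j,T_L,T_R)$, but the underlying argument is identical.
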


\begin{proof}
This follows from the recurrence \eqref{eq:proba_conditioning_recursive}.  Let us specify that recurrence to the case $a=1,b=r$, and write $\mathbf{i^2}$ for the word $\mathbf{i''}$ where all letters are decreased by $j$, so that they are between $1$ and $r-j$. We obtain the recurrence
\[
\prob^{\mathbf{i}}([r])=\sum_{j\text{ s.t. ~\ref{asterisk}}}\prob^{\mathbf{i'}}([j-1])\,\prob^{\mathbf{i^2}}([r-j])\,\mathrm{wt}_q([r],j,i_r).
\]

We need to show  that $\sum_{T\in\mc{P}(\mathbf{i})}\mathrm{wt}(T,\mathbf{i})$  also satisfies such a recurrence.
 Given a tree in $\mc{P}(\mathbf{i})$, let $j-1$ and $r-j$ be the sizes of its left and right subtrees respectively. The root has necessarily label $i_r$ and thus weight $\mathrm{wt}_q([r],j,i_r)$. The compatibility condition imposes that all labels of the left subtree $T'$ are between $1$ and $j-1$, while all labels on the right subtree $T''$ are between $j+1$ and $r$. This corresponds precisely to the condition~\ref{asterisk} that has to be satisfied, and we let $\mathbf{i'},\mathbf{i''}$ be the subsequences of $\mathbf{i}$ corresponding to $T'$ and $T''$. Then $T'$ is in $\mc{P}(\mathbf{i'})$, while $T''$ is in  $\mc{P}(\mathbf{i''})$. Putting things together, we obtain the desired result.
\end{proof}

A direct byproduct of the proof is that trees in $\mc{P}(\mathbf{i})$ correspond precisely to a ``history'' of the sequential process started with $\mathbf{i}$. The decreasing tree encodes the \emph{filling order} in which sites get occupied along the process: The $k$th site to be occupied is $j=bs(v)$ where $v$ is the node with $\mathrm{dec}(v)=k$. This is nothing but the standard bijection between decreasing trees and permutations, which we recall at the beginning of Section~\ref{sub:cube_decomposition}. Such a filling order is possible with $\mathbf{i}$ as initial word precisely when the corresponding decreasing tree is $\mathbf{i}$-compatible. In that case, the weight of the tree is the probability of that ordering.

\begin{example}
For ease of comparison with Postnikov's interpretation, we revisit \cite[Example 17.6]{Pos09}. 
Consider $\mathbf{i}=34717843$. The process of dropping particles and stabilizing at each step is depicted in Figure~\ref{fig:postnikov_tree} on the right, and the corresponding Postnikov tree is on the left. The labels inside record the canonical binary search labeling, whereas the decreasing labeling is on the outside in parentheses.
\end{example}

Equation~\eqref{eq:Ac_probabis} now immediately yields the following $q$-analogue of \cite[Theorem 17.7]{Pos09}:

\begin{theorem}
\label{th:ac_via_postnikov_trees}
For any $c\in\wc{r}{}$ and any $\mathbf{i}\in [r]^r$ such that $\mathrm{cont}(\mathbf{i})=c$, we have
\begin{align*}
A_{c}(q)=\sum_{T\in\mc{P}(\mathbf{i})} \qfact{r}\,\mathrm{wt}(T,\mathbf{i}).
\end{align*}
\end{theorem}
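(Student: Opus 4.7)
The statement is an essentially immediate corollary of the preceding tree-sum formula for $\prob^{\mathbf{i}}([r])$ combined with the probabilistic identity~\eqref{eq:Ac_probabis}. My plan is therefore to observe that the substantive work has already been done, and simply chain the two identities together, taking care only that the content condition $\mathrm{cont}(\mathbf{i})=c$ is invoked correctly so that $A_{\mathrm{cont}(\mathbf{i})}(q)=A_c(q)$.

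Concretely: fix $c\in\wc{r}{}$ and any $\mathbf{i}\in [r]^r$ with $\mathrm{cont}(\mathbf{i})=c$. By the preceding theorem,
\begin{equation*}
\prob^{\mathbf{i}}([r])\;=\;\sum_{T\in\mc{P}(\mathbf{i})}\mathrm{wt}(T,\mathbf{i}).
\end{equation*}
On the other hand, the sequential-process identity~\eqref{eq:Ac_probabis} gives
\begin{equation*}
\prob^{\mathbf{i}}([r])\;=\;\frac{A_{\mathrm{cont}(\mathbf{i})}(q)}{\qfact{r}}\;=\;\frac{A_{c}(q)}{\qfact{r}}.
\end{equation*}
Equating the two right-hand sides and multiplying by $\qfact{r}$ yields the desired formula.

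There is essentially no obstacle: the only points to flag are (i) that the right-hand side of the claimed identity is \emph{independent} of the choice of $\mathbf{i}$ among words with $\mathrm{cont}(\mathbf{i})=c$, which follows from the fact that $\prob^{\mathbf{i}}([r])$ depends only on $\mathrm{cont}(\mathbf{i})$ (already noted above the definition of $\prob^{\mathbf{i}}$), and (ii) that $\qfact{r}\mathrm{wt}(T,\mathbf{i})\in\bN[q]$ so that the sum over $\mc{P}(\mathbf{i})$ is a bona fide combinatorial expansion with positive coefficients — this last point is not strictly required by the theorem statement but is worth a remark, and it follows from item~\ref{it1} of Theorem~\ref{th:q_list} once the identity itself is proved. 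If desired, one can also spell out the specialization $q=1$ to recover Postnikov's original interpretation as a sanity check.
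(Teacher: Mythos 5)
Your proposal is correct and matches the paper exactly: the paper derives Theorem~\ref{th:ac_via_postnikov_trees} as an immediate consequence of the preceding tree-sum formula for $\prob^{\mathbf{i}}([r])$ together with~\eqref{eq:Ac_probabis}, which is precisely the chain of identities you give. Your additional remarks on independence of the choice of $\mathbf{i}$ and on positivity of $\qfact{r}\,\mathrm{wt}(T,\mathbf{i})$ are consistent with the paper's surrounding discussion.
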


\begin{example}
Consider $\mbf{i}=2244$. Then $\mathrm{cont}(\mbf{i})=(0,2,0,2)$. 
Figure~\ref{fig:22_44} shows the relevant Postnikov trees.
\begin{figure}[!ht]
\includegraphics[width=0.5\textwidth]{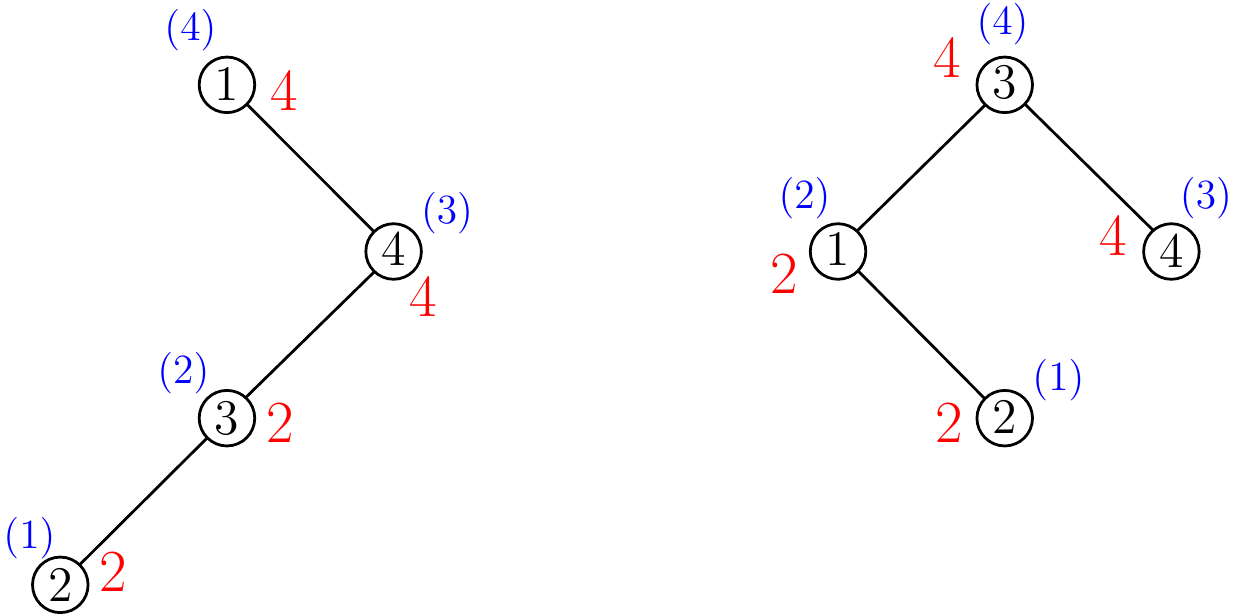}
\label{fig:22_44}
\caption{The two Postnikov trees for $\mbf{i}=2244$.}
\end{figure}

Theorem~\ref{th:ac_via_postnikov_trees} now tells us that
\[
A_{(0,2,0,2)}(q)=\qfact{4}\cdot\frac{1}{\qint{2}}\cdot q^3\frac{\qint{1}}{\qint{4}}+ \qfact{4}\cdot q\frac{1}{\qint{2}}\cdot q\frac{\qint{1}}{\qint{2}}=q^2(1+q+q^2)^2.
\]
\end{example}

\begin{remark}
Note that each of the two summands above belong to $\bN[q]$. This is the case in general indeed. Keeping with the theme, one simply needs to `$q$-ify' \cite[Lemma 17.5]{Pos09}. Consider $\qfact{r}$ times all denominators in $\mathrm{wt}(T,\mbf{i})$ times the power of $q$ accumulated in the numerator.
This rational function is a polynomial which tracks $q^{\ell(w)}$ over the following set of permutations $w$, contingent on $T$ and $\mbf{i}$ naturally. 
As in loc. cit., consider labelings of the underlying tree $T$ with permutations $w\in \sgrp_r$ such that if we are in the first case (resp. second case) of \eqref{eq:proba_conditioning_ter} for some node $j$, then  $w(j)$ exceeds the labels $w(k)$ for all $k$ in its right (resp. left) subtree.\footnote{There is a minor typo in the statement of \cite[Lemma 17.5]{Pos09}: the word `branch' should be replaced by `tree'.}
\end{remark}

\begin{remark}
Since different choices of $\mathbf{i}$ with same $\mathrm{cont}(\mathbf{i})$ determined the same remixed Eulerian number, smart choices can optimize things a bit. For instance, starting the sequence $\mathbf{i}$ with all elements of $\supp(c)$ (in any order) gives simpler trees in general. This corresponds of course to starting the particle process by dropping one particle at each site of the support.
\end{remark}

\section{Two special subfamilies}
\label{sec:examples}

We consider two large subfamilies of $A_c(q)$ by restricting the possible indices  in $\supp(c)$. These families encompass all special cases listed in Theorem~\ref{th:q_list}.
 They have specific properties that make them particularly nice from an enumerative standpoint: elements of the first family have elementary product formulas that generalize $q$-binomial coefficients, cf. Proposition~\ref{prop:product_formula}. Elements of the second subfamily have certain generating functions whose coefficients have simple product formulas, cf.~\eqref{eq:qhit_gf}, and coincide with the family of {\em $q$-hit polynomials}. 

\subsection{An extension of $q$-binomial coefficients.}
\label{sub:qbinomial}

\begin{definition}
\label{defi:ebr}
We define $EB_r\subset \wc{r}{}$ to be the set of $c=(c_1,\dots,c_{r})$ such that there exists a $k\in\{0,\ldots,r\}$ satisfying $\sum_{i\leq j}c_i\geq j$ for $j=1,\ldots,k$ and  $\sum_{i\leq j}c_{r+1-i}\geq j$ for $j=1,\dots,r-k$.
\end{definition} 

Remark that both items~\ref{it8} and ~\ref{it9} of Theorem~\ref{th:q_list} are in this subfamily $EB_r$. The following explicit product formula proves and generalizes them.

\begin{proposition}
\label{prop:product_formula}
For any $c\in EB_r$ with $k$ as in Definition~\ref{defi:ebr}, we have
\begin{equation}
\label{eq:product_formula}
A_{c}(q)=q^{d_c}\qbin{r}{k}\;\prod_{i=1}^k \qint{i}^{c_i}\; \prod_{i=1}^{r-k} \qint{i}^{c_{r+1-i}},
\end{equation}
where $d_c=\sum_{j=1}^{r-k}\sum_{i=1}^j(c_{r+1-i}-1)$.
\end{proposition}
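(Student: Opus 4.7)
The plan is to reduce the claimed formula to the cleaner identity
\begin{equation}\label{eq:decouple_plan}
A_c(q) = \qbin{r}{k}\,A_{c^L}(q)\,A_{c^R}(q),
\end{equation}
where $c^L\coloneqq(c_1,\ldots,c_k)$ and $c^R\coloneqq(c_{k+1},\ldots,c_r)$, and then establish \eqref{eq:decouple_plan} probabilistically. I would first observe that applying the prefix and suffix ballot conditions at their extremal indices $j=k$ and $j=r-k$ forces $\sum_{i\leq k}c_i=k$ and $\sum_{i>k}c_i=r-k$, so $c^L\in\wc{k}{}$ satisfies the full prefix ballot and $c^R\in\wc{r-k}{}$ satisfies the full suffix ballot. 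Theorem~\ref{th:q_list}\ref{it9} then gives $A_{c^L}(q)=\prod_{i=1}^k\qint{i}^{c_i}$; applying the same theorem to the reversal of $c^R$ and invoking reversal symmetry (Lemma~\ref{lem:reversal symmetry}), together with $\qint{i}\big|_{q\to q^{-1}}=q^{-(i-1)}\qint{i}$, yields $A_{c^R}(q)=q^{\delta}\prod_{i=1}^{r-k}\qint{i}^{c_{r+1-i}}$ for some exponent $\delta$. A short algebraic simplification using $\binom{r-k+1}{2}+\binom{r-k}{2}=(r-k)^2$ together with $\sum_{i=1}^{r-k}c_{r+1-i}=r-k$ shows $\delta=d_c$. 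Substituting these into \eqref{eq:decouple_plan} then recovers \eqref{eq:product_formula}.

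Next, I would prove \eqref{eq:decouple_plan}. Via \eqref{eq:Ac_proba} and $\qbin{r}{k}\qfact{k}\qfact{r-k}=\qfact{r}$, it is equivalent to
\[
\prob_c([r])=\prob_{c^L}([k])\cdot\prob_{c^R}([k+1,r]),
\]
where the last factor is computed by placing $c^R$ at positions $k+1,\ldots,r$ and asking that the process stabilize at $[k+1,r]$; by translation this equals $\prob_{c^R}([r-k])$. Running the sequential process with a word $\mbf{i}=\mbf{i}^L\mbf{i}^R$ where $\mbf{i}^L$ has content $c^L$ and letters in $[1,k]$ and $\mbf{i}^R$ has content $c^R$ and letters in $[k+1,r]$, the argument relies on the following no-overflow pair: Phase~1 almost surely stabilizes at some $S^L\subset(-\infty,k]$, and the standalone Phase~2 almost surely stabilizes at some $S^R\subset[k+1,+\infty)$. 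These two statements are interchanged by the reflection $i\mapsto r+1-i$ of $\bZ$ (which swaps prefix/suffix ballot and $q_L\leftrightarrow q_R$), so only the prefix version requires direct proof. Granted both, Phase~2 particles never migrate into $(-\infty,k]$, so the Phase~1 ``wall'' is dynamically irrelevant to Phase~2, the final stable set is $S^L\sqcup S^R$, and this equals $[r]$ exactly when $S^L=[1,k]$ and $S^R=[k+1,r]$; independence of the two phases then delivers the product.

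The technical heart of the proof is the prefix no-overflow. Passing to the non-sequential process on $c^L$ (which has the same stable output distribution as Phase~1) and setting $T_j(t)\coloneqq\#\{\text{particles at positions}\geq j\text{ at time }t\}$, the prefix ballot on $c^L$ yields the initial bound $T_j(0)\leq k-j+1$ for $1\leq j\leq k+1$, with equality $T_{k+1}(0)=0$. L-moves only tighten some $T_j$; the decisive case is an R-move at position $i$, which requires $c_i\geq 2$, equivalent to $T_i-T_{i+1}\geq 2$, forcing $T_{i+1}\leq T_i-2\leq(k-i+1)-2=k-i-1$, so after the move $T_{i+1}\leq k-i=k-(i+1)+1$: the invariant is preserved. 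In particular $T_{k+1}\equiv 0$ throughout, establishing the prefix no-overflow. The main obstacle in the whole argument is locating this invariant; once in hand, the rest of the proof is routine bookkeeping.
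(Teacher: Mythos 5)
Your proof has one genuine logical problem: both single-phase factors are evaluated by citing Theorem~\ref{th:q_list}\ref{it9}. In this paper, item~\ref{it9} has no independent proof; it is precisely the case $k=r$ of Proposition~\ref{prop:product_formula}, which is the statement that ``proves and generalizes'' items~\ref{it8} and~\ref{it9}. Your factorization $A_c(q)=\qbin{r}{k}\,A_{c^L}(q)\,A_{c^R}(q)$ degenerates when $k=r$ (then $c^R$ is empty), so it cannot supply this base case, and as written the argument is circular. The repair is short but must be made: for $c^L\in\wc{k}{}$ satisfying the full prefix ballot condition, run the sequential process on the word $1^{c_1}2^{c_2}\cdots k^{c_k}$. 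The ballot condition gives $j_t\le t$ for the site $j_t$ of the $t$th drop, and conditioned on ending at $[k]$ the $t$th particle must settle at the new site $t$ by exiting to the right of the stable interval $[t-1]$, which happens with probability $\qint{j_t}/\qint{t}$; the product of these exit probabilities is $\prod_{i=1}^k\qint{i}^{c_i}/\qfact{k}$. This direct computation is exactly the first half of the paper's proof, and with it in hand your reduction becomes sound.

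Apart from this, your route essentially coincides with the paper's: both run the sequential process in two non-interfering phases (left block first, then right block, the latter handled by reflection) and multiply the success probabilities. Two points in your favour: the invariant $T_j\le k-j+1$, together with the observation that an R-move at $i$ requires $T_i-T_{i+1}\ge 2$, is a correct and fully rigorous justification of the non-interference claim, which the paper only asserts (``the stable interval $\{1,\dots,k\}$ does not interfere with the analysis''); and your identification of the reversal exponent $\delta$ with $d_c$ checks out. One step you should spell out in a final write-up is why the law of the Phase-2 outcome is independent of the Phase-1 outcome $S^L$: by induction on moves, as long as no Phase-2 particle has entered $(-\infty,k]$, every site $\le k$ carries at most one particle, so the Phase-2 dynamics restricted to $[k+1,\infty)$ coincide with the standalone $c^R$ process whatever $S^L$ is, and your suffix no-overflow invariant then keeps Phase-2 particles out of $(-\infty,k]$ for all time.
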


Note that by~\eqref{eq:product_formula}, $d_c$ is the smallest exponent of $q$ occurring in $A_c(q)$. We give a formula for this exponent in~\eqref{eq:valuation} that is valid for any $c\in\wc{r}{}$, and specializes to the expression above when $c\in EB_r$.

\begin{proof}
We use the probabilistic interpretation of $A_c(q)$ in Section~\ref{sub:proba_process} in its sequential version. We choose the word 
\[\mathbf{i}=1^{c_1}2^{c_2}\cdots k^{c_k} \;r^{c_r} (r-1)^{c_{r-1}}\cdots (k+1)^{c_{k+1}}.\]
It corresponds to the following  dropping order of particles: start with all $c_1$ particles at site $1$, then all $c_2$ particles at site $c_2$, and so on until we drop $c_k$ particles at site $k$. Then, drop all particles at site $r$, then at site $r-1$, and so on down to $k+1$.

The first part of the definition of $EB_r$ implies that in order to end with the stable configuration $\{1,\ldots,r\}$, the following must hold: for $i\leq k-1$, the intermediate stable configuration after the $i$th step is the interval $\{1,\ldots,i\}$, and the $i+1$-th particle either drops at site $i+1$ or drops on the previous interval and exits to the right. The probability of this happening is $\prod_{i=1}^k\qint{i}^{c_i}/\qfact{k}$ as is readily computed.
 
  For the remaining particles, the situation is symmetric of the first part, since the stable interval $\{1,\ldots,k\}$ does not interfere with the analysis. The probability of success in this second half is worked out to be $q^{d_c}\prod_{i=1}^{r-k} \qint{i}^{c_{r+1-i}}/\qfact{r-k}$.
 
 Now by \eqref{eq:Ac_proba}, we obtain $A_c(q)$ by multiplying these two expressions together with $\qfact{r}$, thus giving the desired expression.
 \end{proof}
 
In terms of the tree interpretation from Section~\ref{sec:postnikov_trees}, we have in fact shown that for words $\mathbf{i}$ considered in the preceding proof, there is a unique compatible tree.
We further note that Proposition~\ref{prop:product_formula}
is the $q$-analogue of~\cite[Theorem 4.4]{Liu16}. Finally, the expression for $A_c(q)$ for $c\in EB_r$ motivates understanding the valuation $d_c$ in general; we return to this in Section~\ref{sec:degree}.

\begin{example}
Consider $c=(2,0,1,0,2,1)\in EB_6$. Then the $k$ in Definition~\ref{defi:ebr} equals $3$. 
We have $d_c=c_6-1+c_6+c_5-2+c_6+c_5+c_4-3=1$. The reader may verify that
\[
A_c=q^{1}\qbin{6}{3}\qint{1}^2\qint{3}\,\qint{2}^2\qint{1}.
\]
\end{example}

\subsection{Interval support and $q$-hit numbers}
We now focus on the case where\emph{ $\supp(c)$ is an interval}. That is, $c$ can be written as $c=0^i\beta 0^{r-k-i}$ where $\beta\vDash r$ is a (strong) composition with $k\coloneqq \ell(\beta)$. 

By~\cite[Proposition 5.6]{NT21} we have
\begin{equation}
\label{eq:gf_connected}
\sum_{j\geq 0}t^j\prod_{i=1}^k\qint{j+i}^{\beta_i}=\frac{\sum_{i=0}^{r-k}A_{0^i\beta 0^{r-k-i}}(q) t^i}{(t;q)_{r+1}}.
\end{equation}

\begin{remark}If $\beta$ has a single part, so $\beta=(r)$, we get 
\begin{equation}
\label{eq:q_Eulerian}
\sum_{j\geq 0}\qint{j+1}^{r}t^j=\frac{\sum_{i=0}^{r-k}A_{0^i,r, 0^{r-k-i}}(q) t^i}{(t;q)_{r+1}}.
\end{equation}
This shows Theorem~\ref{th:q_list}\ref{it3}, since the left-hand side was already considered by Carlitz~\cite{Carlitz} and this shows by comparison that $A_{0^i,r, 0^{r-k-i}}(q)$ counts permutations in $\sgrp_r$ with $i$ descents with $q$-weight given by the major index. An alternative proof of this is given in~\cite{Mit20}.
\end{remark}

As briefly touched upon in \cite[Section 5.3]{NT21}, the family of remixed Eulerian numbers $A_{0^i\beta 0^{r-k-i}}(q)$ coincides with polynomials enumerating \emph{$q$-hit numbers} appearing in the  work of Garsia-Remmel~\cite{GarsiaRemmel}. This observation is also instrumental in order to relate these numbers to recent work around chromatic symmetric functions, see~\cite{NTchromatic}. We now justify this claim. 


Fix $r$ and consider $\lambda=(\lambda_1,\lambda_2,\ldots,\lambda_r)$ with $\lambda_i$ integers satisfying $r\geq \lambda_1\geq\cdots\geq\lambda_r\geq 0$. 
So $\lambda$ can be seen as a Young diagram in an $r\times r$ square, see Figure~\ref{fig:hits} with $r=6$ and $\lambda=(5,5,3,3,3,0)$.  Following \cite{GarsiaRemmel} the $q$-hit numbers $H_j(\lambda,q)$ can be defined by: 
\begin{align}
\label{eq:qhit_gf}
\sum_{j\geq 0}t^j\prod_{i=1}^{r}\qint{i-\lambda_{r+1-i}+j}=\frac{\sum_{j=0}^r H_j(\lambda,q)t^j}{(t;q)_{r+1}}.
\end{align}

\begin{remark}For $q=1$, the hit numbers $H_j(\lambda)\coloneqq H_j(\lambda,1)$ enumerate permutations in $\sgrp_{r}$ whose associated graph has exactly $j$ points inside of the shape $\lambda$. Figure~\ref{fig:hits} (right) shows a permutation contributing to $H_2(\lambda)$.

There exists a refinement of this interpretation that gives the $q$-hit numbers \cite{Dworkin}. We refrain from stating it given its technicality and particularly as we do not need it in the sequel, and simply note that Theorem~\ref{th:q_list}\ref{it6} can be deduced from this combinatorial interpretation.
\end{remark}

\begin{figure}
\begin{center}
\includegraphics[scale=0.4]{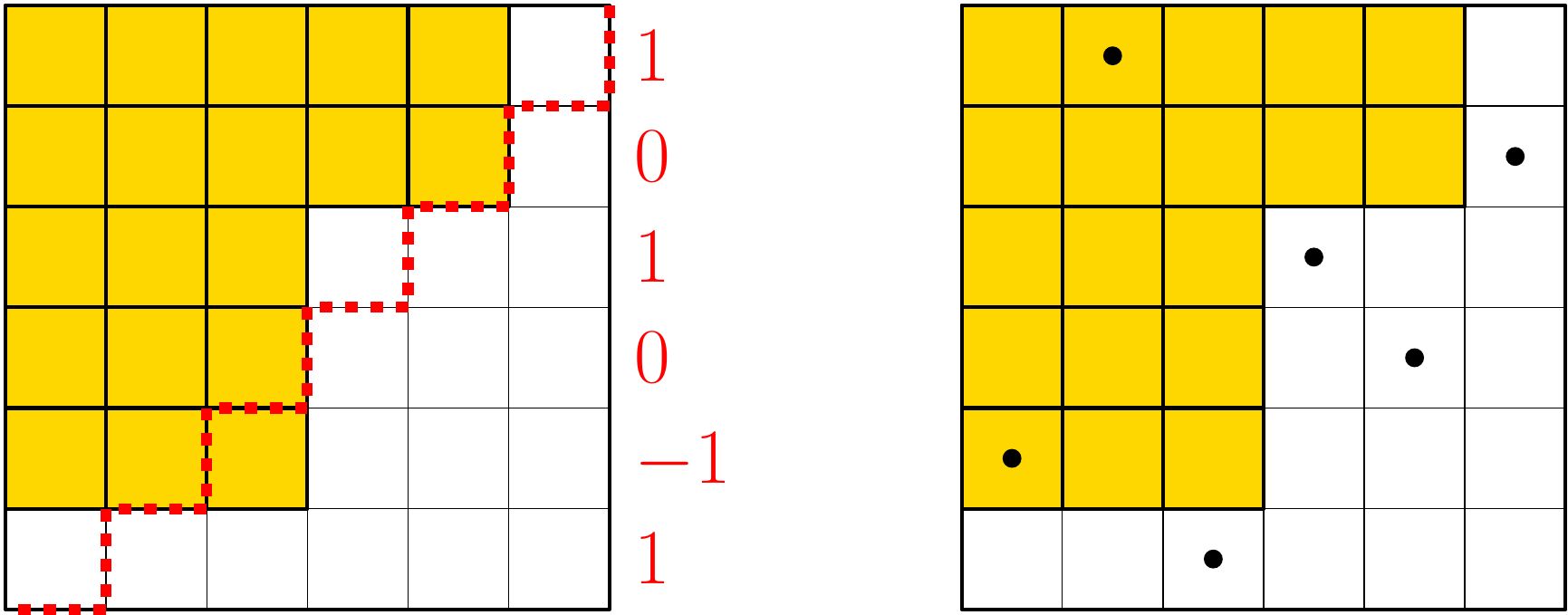}
\end{center}
\caption{\label{fig:hits} Partition in a square (left), superimposition with the graph of a permutation.}
\end{figure}

We will now compare the left-hand sides of \eqref{eq:gf_connected} and \eqref{eq:qhit_gf}. We will show that every nonzero $q$-hit polynomial is a remixed Eulerian number $A_c(q)$ where $c$ has interval support, and vice versa.

 For $i=1,2,\dots, r$, the values $i-\lambda_{r+1-i}$ occurring in~\eqref{eq:qhit_gf} go from $1-\lambda_r\leq 1$  to $r-\lambda_1\geq 0$ with successive differences in $\{1,0,-1,-2,\dots\}$. 
Geometrically these values express the algebraic distance in each row between $\lambda$ and the staircase $\delta^r\coloneqq(r,r-1,\ldots,1)$, computed from bottom to top. In our running example, we have the sequence of values $1,-1,0,1,0,1$, as Figure~\ref{fig:hits} reveals. 
 
A moment's thought shows that these values must thus form an interval, say $[a,a+k-1]$ with $k\geq 1$, that necessarily contains $0$ or $1$. Define $\beta_i(=\beta_i(\lambda))$ to be number of times that the values  $a+i-1$ is obtained, so that $B(\lambda)\coloneqq(\beta_1,\ldots,\beta_k)$ is a composition of length $k$. The coefficient $C_j$ of $t^j$ on the left-hand side of  \eqref{eq:qhit_gf} can be rewritten as
\[\prod_{i=1}^k\qint{j+a+i-1}^{\beta_i}.\]
 
 If $a=1$, we recognize this as the coefficient of $t^j$ on the left-hand side of \eqref{eq:gf_connected} with $\beta=B(\lambda)$. We thus have $H_j(\lambda,q)=A_{0^j\beta 0^{r-k-j}}(q)$ for $j=0,\ldots,r-k$ and is zero otherwise.
 
 If $a\neq 1$, one has $a\leq 0$ to ensure that $0$ or $1$ belong to $[a,a+k-1]$, which also implies $k\geq 1-a$. Thus $C_j=0$ for $j=0,\ldots,-a$ and $C_{j-a+1}=  \prod_{i=1}^k\qint{j+i}^{\beta_i}$. The left-hand side of \eqref{eq:qhit_gf} is then $t^{1-a}$ times the left-hand side of \eqref{eq:gf_connected}. It follows that $H_j(\lambda,q)=A_{0^{j-1+a}\beta 0^{r-k+1-a-j}}(q)$ for $j=1-a,\ldots,r-k+1-a$ and is zero otherwise.\smallskip

We have shown that every nonzero $q$-hit polynomial is a remixed Eulerian number $A_c(q)$ where $c$ has interval support. Conversely, given a composition $\beta=(\beta_1,\ldots,\beta_k)$, it is always possible to construct $\lambda$ such that $B(\lambda)=\beta$, and thus $H_j(\lambda,q)=A_{0^j\beta 0^{r-k-j}}(q)$ as above.

Indeed,  let $l^r(\beta)=(l_1,\ldots,l_r)$ be obtained by concatenating $\beta_k$ occurrences of $k$, $\beta_{k-1}$ occurrences of $k-1$, and so on until $\beta_1$ occurrences of $1$. If one defines $\lambda\coloneqq \delta_r-l^r(\beta)$ with componentwise subtraction, then $\lambda$ corresponds to a Ferrers shape inside an $r\times r$ square that satisfies $B(\lambda)=\beta$.

\section{Degree, symmetry, unimodality}
\label{sec:degree}

In this section we study the $A_{c}(q)$ as polynomials in $q$. They are known to have nonnegative integer coefficients, cf.~\cite[Proposition 5.4]{NT21}. We will revisit the proof of this fact to show that the $A_{c}(q)$ are \emph{symmetric} and \emph{unimodal}. To do this, we first need to determine the degree and valuation of $A_{c}(q)$.

\subsection{Degree and valuation}

The following simple relation was recorded as Theorem~\ref{th:q_list}\ref{it2} in the introduction. For any finite sequence $a=(a_1,a_2,\ldots,a_k)$, define $\rev{a}=(a_k,a_{k-1},\ldots,a_1)$.

\begin{lemma}
\label{lem:reversal symmetry}
Let $c=(c_1,\dots,c_{r})\in \wc{r}{}$. Then we have
\[
A_c(q)=q^{\binom{r}{2}}A_{\rev{c}}(q^{-1}).
\]
\end{lemma}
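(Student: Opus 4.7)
The cleanest route is via the probabilistic interpretation of Section~\ref{sub:proba_process}. The key observation is that reversing a configuration and simultaneously swapping the left/right jump probabilities describes exactly the same stochastic process, viewed in a mirror. I would first carry out this reflection argument to derive an equality of success probabilities, then convert it back to remixed Eulerian numbers using \eqref{eq:Ac_proba}, taking care of the factor that appears when substituting $q \mapsto q^{-1}$ in $\qfact{r}$.

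\textbf{Step 1: Probabilistic reflection.} Recall that with jump probabilities $q_L = q/(1+q)$ and $q_R = 1/(1+q)$, one has $A_c(q)/\qfact{r} = \prob_c^{\,q}([r])$, where the superscript records the parameter used. The reflection $i \mapsto r+1-i$ of $\bZ$ sends the configuration $c$ to $\rev{c}$, sends the target stable set $[r]$ to itself, and interchanges the roles of left and right jumps. Applying this bijection to sample paths of the process, the probability of ending at $[r]$ starting from $c$ with parameters $(q_L, q_R)$ equals the probability of ending at $[r]$ starting from $\rev{c}$ with parameters $(q_R, q_L)$. Since the latter corresponds to the parameter $q_R/q_L = q^{-1}$, this yields
\begin{align*}
\frac{A_c(q)}{\qfact{r}} \;=\; \prob_c^{\,q}([r]) \;=\; \prob_{\rev{c}}^{\,q^{-1}}([r]) \;=\; \frac{A_{\rev{c}}(q^{-1})}{\qfact{r}\big|_{q \to q^{-1}}}.
\end{align*}

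\textbf{Step 2: The $q$-factorial substitution.} A direct computation gives
\begin{align*}
\qfact{r}\big|_{q \to q^{-1}} \;=\; \prod_{i=1}^{r}\frac{q^{-i}-1}{q^{-1}-1} \;=\; \prod_{i=1}^{r} q^{-(i-1)}\qint{i} \;=\; q^{-\binom{r}{2}}\qfact{r}.
\end{align*}
Substituting this into the equality from Step~1 and clearing denominators yields $A_c(q) = q^{\binom{r}{2}} A_{\rev{c}}(q^{-1})$, as required.

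\textbf{Potential obstacle.} The probabilistic identity presupposes $q_R > 0$ (and strictly speaking $q_L, q_R \geq 0$), which corresponds to $q \in (0, \infty)$. However, both sides of the desired identity are polynomials (resp. Laurent polynomials) in $q$, so the equality on a nonempty open interval extends to a formal identity. This analytic-to-formal passage is the only subtlety; the combinatorial core reduces to the symmetry of the process under spatial reflection, which is essentially tautological. An alternative route, should a more combinatorial proof be preferred, is to exhibit a weight-preserving bijection between Postnikov trees for $\mathbf{i}$ and for $\rev{\mathbf{i}}$ via left-right reflection of the underlying binary tree, and to check that the weights in \eqref{eq:proba_conditioning_ter} satisfy $\mathrm{wt}_{q^{-1}}([a,b],j,i) = q^{-(b-a)}\mathrm{wt}_q([a,b], a+b-j, a+b-i)$ (up to the bookkeeping of the overall power of~$q$), yielding the same conclusion via Theorem~\ref{th:ac_via_postnikov_trees}.
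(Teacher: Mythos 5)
Your proof is correct and follows essentially the same route as the paper: a spatial reflection of the particle process swapping $q_L$ and $q_R$, followed by the computation $\qfact{r}\big|_{q\to q^{-1}}=q^{-\binom{r}{2}}\qfact{r}$. The paper's version is terser but identical in substance, and your remark about extending the identity from $q\in(0,\infty)$ to a formal one is a reasonable (if routine) added precaution.
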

\begin{proof}
This is straightforward from the probabilistic perspective. Indeed we can reverse the lattice of sites $\bZ$, and then consider the IDLA process obtained by swapping $q_L$ and $q_R$. The probability of reaching the stable configuration $[r]$ starting from $\rev{c}$ in this flipped process (with $q=q_R/q_L$) is the same as that of reaching $[r]$ starting from $c$ in the original description (with $q^{-1}=q_L/q_R$). This implies the equality in question.
\end{proof}

Given $c\in\wc{r}{}$, we let $D_c$ denote the \emph{degree} of $A_c(q)$. We also  define $d_c$ as the \emph{valuation} of $A_c(q)$, i.e. the smallest exponent of $q$ with a nonzero coefficient.

\begin{remark}
\label{remark:constant_term}
Before the general case, let us characterize the $c$ such that $d_c=0$. Equivalently, we need to determine when the constant term $A_c(0)$ vanishes. From the probabilistic process point of view, $q=0$ corresponds to particles only jumping right. It is then easily shown that $A_c(0)$ is zero unless $c$ satisfies $\sum_{i\leq j} c_i\geq j$ for $j=1,\dots,r$, in which case $A_c(0)=1$. It follows immediately that $d_c=0$ if and only if that condition is satisfied.
\end{remark}

It turns out that  $A_c(q)$ is symmetric with respect to the interval $[d_c,D_c]$, as we shall show in Theorem~\ref{th:Actilde_symmetric}. To this end, we collect some notation that helps describe $D_c$ and $d_c$ combinatorially. For $t\in\bR$, we use
\[
t^+\coloneqq \max(0,t).
\]
The following pictorial perspective for $c\in \wc{r}{}$ is occasionally useful. Attach a \L ukasiewicz path $P_c$ with $c=(c_1,\dots,c_r)$ by starting at the origin and translating by $(1,c_i-1)$ as $c$ is read from left to right.
See Figure~\ref{fig:path_to_compute_Sc}.
For $i\in [r]$, we define $h_i(c)$ to be the ordinate on $P_c$ after the $i$th step. More precisely, $h_i(c)=\sum_{1\leq j\leq i}(c_j-1)$.
Note that $h_r(c)$ is necessarily $0$.
We let 
\begin{align}
H(c)&\coloneqq \sum_{1\leq i\leq r-1}h_i(c),\\
H^{-}(c)&\coloneqq\displaystyle \sum_{1\leq i\leq r-1}(-h_i(c))^{+}.
\end{align}

\begin{figure}[ht]
  \begin{tikzpicture}[scale=.7]
    \draw[gray,very thin] (0,0) grid (10,4);
    \draw[line width=0.25mm, black, <->] (0,2)--(10,2);
    \draw[line width=0.25mm, black, <->] (1,0)--(1,4);
    \node[draw, circle,minimum size=5pt,inner sep=0pt, outer sep=0pt, fill=blue] at (1, 2)   (b) {};
    \node[draw, circle,minimum size=5pt,inner sep=0pt, outer sep=0pt, fill=red] at (2, 1)   (c) {};
    \node[draw, circle,minimum size=5pt,inner sep=0pt, outer sep=0pt, fill=blue] at (3, 3)   (d) {};
    \node[draw, circle,minimum size=5pt,inner sep=0pt, outer sep=0pt, fill=blue] at (4, 2)   (e) {};
    \node[draw, circle,minimum size=5pt,inner sep=0pt, outer sep=0pt, fill=red] at (5, 1)   (f) {};
    \node[draw, circle,minimum size=5pt,inner sep=0pt, outer sep=0pt, fill=red] at (6, 0)   (g) {};
    \node[draw, circle,minimum size=5pt,inner sep=0pt, outer sep=0pt, fill=red] at (7, 0)   (h) {};
    \node[draw, circle,minimum size=5pt,inner sep=0pt, outer sep=0pt, fill=blue] at (8, 2)   (i) {};
    \draw[blue, line width=0.7mm] (b)--(c);
    \draw[blue, line width=0.7mm] (c)--(d);
    \draw[blue, line width=0.7mm] (d)--(e);
    \draw[blue, line width=0.7mm] (e)--(f);
    \draw[blue, line width=0.7mm] (f)--(g);
    \draw[blue, line width=0.7mm] (g)--(h);
    \draw[blue, line width=0.7mm] (h)--(i);
    \draw[line width=0.55mm, red, dotted] (3,2)--(3,3);
    \draw[line width=0.55mm, orange, dotted] (5,2)--(5,1);
    \draw[line width=0.55mm, orange, dotted] (6,2)--(6,0);
    \draw[line width=0.55mm, orange, dotted] (7,2)--(7,0);
    \draw[line width=0.55mm, orange, dotted] (2,2)--(2,1);
  \end{tikzpicture}
  \caption{$P_{c}$ when ${c}=(0,3,0,0,0,1,3)$.}
  \label{fig:path_to_compute_Sc}
\end{figure}
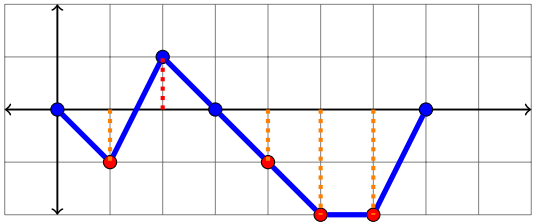

\begin{theorem}\label{th:properties_ac}
  Let $c=(c_1,\dots,c_{r})\in \wc{r}{}$.
  The following equalities hold:
  \begin{align}
  \label{eq:valuation}
  d_c&=H^{-}(c),\\
\label{eq:degree}
D_c&=\binom{r}{2}-\sum_{1\leq i\leq r-1} (h_i(c))^+.
\end{align}
\end{theorem}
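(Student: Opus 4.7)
The plan is to prove the degree formula~\eqref{eq:degree} by induction and deduce the valuation formula~\eqref{eq:valuation} from the reversal symmetry of Lemma~\ref{lem:reversal symmetry}. A direct substitution $\rev{c}_j = c_{r+1-j}$ gives $h_i(\rev{c}) = -h_{r-i}(c)$, whence $\sum_{i=1}^{r-1}(h_i(\rev{c}))^+ = H^{-}(c)$. The identity $A_c(q) = q^{\binom{r}{2}}A_{\rev{c}}(q^{-1})$ translates into $d_c = \binom{r}{2} - D_{\rev{c}}$, so \eqref{eq:valuation} for $c$ reduces to \eqref{eq:degree} applied to $\rev{c}$.

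For~\eqref{eq:degree}, I would induct on $r - |\supp(c)|$. The base case $|\supp(c)| = r$ forces $c = (1^r)$ and makes both sides equal to $\binom{r}{2}$. For the inductive step, pick any $i$ with $c_i \ge 2$, let $[a,b]$ denote the maximal interval of $\supp(c)$ containing $i$, and apply the recurrence~\eqref{eq:recurrence_A}. Because $A_c(q) \in \bN[q]$ prevents cancellation, matching degrees on the two sides gives
\[
D_c = \max\bigl(D_{L_i(c)},\; D_{R_i(c)} + i - b - 1\bigr),
\]
with the first (resp.\ second) term omitted when $a=1$ (resp.\ $b=r$). Since $|\supp(L_i(c))| = |\supp(R_i(c))| = |\supp(c)| + 1$ whenever defined, the inductive hypothesis applies. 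Writing $S(c) \coloneqq \sum_{j=1}^{r-1}(h_j(c))^+$, a direct calculation yields
\begin{align*}
S(L_i(c)) - S(c) &= \#\{j \in [a-1, i-1] : h_j(c) \ge 0\},\\
S(R_i(c)) - S(c) &= -\#\{j \in [i, b] : h_j(c) \ge 1\},
\end{align*}
so the inductive step reduces to the dichotomy: either \textup{(a)} $h_j(c) < 0$ for all $j \in [a-1, i-1]$, or \textup{(b)} $h_j(c) \ge 1$ for all $j \in [i, b]$.

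The hard part will be proving this dichotomy; the key observation is that on $[a-1, b]$ the sequence $h_{a-1}(c) \le h_a(c) \le \cdots \le h_b(c)$ is nondecreasing (since $c_j \ge 1$ for $j \in [a, b]$) with a jump of size at least one at $j = i$ (since $c_i \ge 2$). If (a) fails, some $h_{j_0}(c) \ge 0$ with $j_0 \in [a-1, i-1]$; monotonicity then forces $h_{i-1}(c) \ge 0$, whence $h_i(c) \ge 1$, and further monotonicity yields (b). The boundary cases are handled analogously: when $a = 1$, using $h_0(c) = 0 \ge 0$ in place of $h_{j_0}(c) \ge 0$ forces (b); when $b = r$, the relation $h_r(c) = 0$ combined with the jump at $i$ forces $h_{i-1}(c) \le -1$, hence (a). The degenerate case $a = 1$ and $b = r$ forces $c = (1^r)$ and is excluded. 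Once the monotonicity and the dichotomy are in hand, the rest is routine bookkeeping.
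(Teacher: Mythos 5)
Your proposal is correct and follows essentially the same route as the paper's proof: the same induction on $r-|\supp(c)|$ via the recurrence~\eqref{eq:recurrence_A}, the same use of monotonicity of the $h_j$ on $[a-1,b]$ with the jump at $i$ to get the dichotomy, and the same appeal to Lemma~\ref{lem:reversal symmetry} together with $h_i(\reverse{c})=-h_{r-i}(c)$ to transfer between the two formulas. The only difference is cosmetic: you establish the degree formula~\eqref{eq:degree} directly and deduce the valuation~\eqref{eq:valuation} by symmetry, whereas the paper does the reverse.
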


The reader will note that \eqref{eq:valuation} generalizes the result of Remark~\ref{remark:constant_term}, as well as the power of $q$ in Proposition~\ref{prop:product_formula}.

\begin{proof}
First we establish \eqref{eq:valuation}.
The claim is true when $c=(1^{r})$. In this case $A_c(q)=\qfact{r}$ and we have $d_c=0=H^{-}(c)$.
We proceed by (downward) induction on the number of parts in $c$ that equal $1$.

Consider $i\in [r]$  such that $c_i\geq 2$.
  Let $[a,b]$ be the maximal interval in $\supp(c)$ containing $i$.  We have by \eqref{eq:recurrence_A} that 
   \begin{equation}
\label{eq:recurrence_A_again}
\qint{b-a+2}A_{c}(q)=q^{i-a+1}\qint{b-i+1}A_{L_i(c)}(q)+\qint{i-a+1}A_{R_i(c)}(q).
\end{equation}
It follows that the following relation holds
\begin{equation}
\label{eq:recurrence_valuation}
d_c=\min(i-a+1+d_{L_i(c)}, d_{R_i(c)}).
\end{equation}
  We want to check that $H^-(c)$ satisfies the same recurrence.
 Consider the boundary cases first. If $a=1$, only $R_i(c)$ is defined, and it is easily checked that $H^{-}(c)=H^{-}(R_i(c))$.
 If $b=r$, only $L_i(c)$ is defined and we need to check that $H^{-}(c)=i-a+1+H^{-}(L_i(c))$.
 Note that we have $c_{j}>0$ for all $j\in [a,r]$ and $c_{a-1}=0$. From this, and the fact that  $h_{r}(c)=0$, it follows that $h_j(c)\leq 0$ for all $j\in [a-1,r]$.
 Furthermore since $c_i\geq 2$ we are in fact guaranteed that $h_{j}(c)<0$ for $j\in [a-1,i-1]$.
 Now $h(L_i(c))$ is obtained from $h(c)$ by adding $1$ to all entries $h_j(c)$ for $j\in [a-1,i-1]$ and leaving the rest unchanged. Since $h_j(c)<0$ for such $j$, we still have $h_j(L_i(c))\leq 0$. It follows that $H^{-}(c)=i-a+1+H^{-}(L_i(c))$ in this case.

 We now consider the generic situation where $1<a\leq b<r$.
 The arguments are similar to those just presented, and we keep the exposition terse.
 We set $h_0(c)=0$.
Now note that the hypotheses on $a,b,i$ are equivalent to \[h_{a-2}>h_{a-1}\leq h_{a}\leq \dots \leq h_{b-1}\leq h_b>h_{b+1}\]
and $h_{i-1}<h_{i}$. Then $h(L_i(c))$ is obtained by adding from $h(c)$ by adding $1$ to $h_{a-1},\ldots,h_{i-1}$, while $h(R_i(c))$ is obtained from $h(c)$ by subtracting $1$ from $h_{i},\ldots,h_{b}$.

It is clear from this description that $H^-(c) \leq i-a+1+H^-({L_i(c)})$ and $H^-(c)\leq H^-({R_i(c)})$.
To show that it is equal to one of them, consider the sign of $h_i$. If $h_i>0$, then $h_i,\ldots,h_b>0$ from which it follows $H^-(c)=H^-(R_i(c))$. If $h_i\leq 0$, then $h_{a-1},\ldots,h_{i-1}< 0$, and so these $i-a+1$ values imply $H^-(c)=i-a+1+H^-({L_i(c)})$.

We finally establish~\eqref{eq:degree}.
It follows from Lemma~\ref{lem:reversal symmetry} that
\[
d_{\rev{c}}+D_c=\binom{r}{2}.
\]
Now $h_i(\rev{c})=-h_{r-i}(c)$ for $i\in [0,r]$ by a direct computation. Using~\eqref{eq:valuation}, the claim follows.
\end{proof}

In terms of the path $P_c$, we have that $d_c$ equals the sum of the absolute values of the heights of all lattice points that lie strictly below the $x$-axis, and $D_c$ equals $\binom{r}{2}-$sum of the heights of all lattice points that are strictly above the $x$-axis. 

\begin{example}
\label{ex:A_c example}
For $c=(0,3,0,0,0,1,3)$, the \L ukasiewicz path in Figure~\ref{fig:path_to_compute_Sc} tells us that
\begin{align*}
d_c&=1+1+2+2=6\\
D_c&=\binom{7}{2}-1=20.
\end{align*}
As a matter of fact, the full polynomial $A_{c}(q)$  is given by
\[
2 q^{20} + 6 q^{19} + 11 q^{18} + 18 q^{17} + 27 q^{16} + 35 q^{15} + 40 q^{14} + 42 q^{13} + 40 q^{12} + 35 q^{11} + 27 q^{10} + 18 q^{9} + 11 q^{8} + 6 q^{7} + 2 q^{6}.
\]
\end{example}

We remark that we have a symmetric polynomial above. 
This is quite surprising, as no symmetry is apparent in $c$. It turns out to be a general fact, valid for all polynomials $A_c(q)$, that we will now prove together with unimodality.

\subsection{Symmetry and unimodality}
\label{sub:symmetry_unimodality}

Let us say that a polynomial $P(q)$ is $\psu(N)$ if it has positive coefficients and is unimodal and symmetric with respect to $N/2$. Note that $N$ is then the sum of the degree and valuation of $P$. We have the following classical properties, see~\cite{Haglund} for instance:
\begin{lemma}
\label{lem:psu}
If $P$ and $Q$ are $\psu(N)$, then $P+Q$ is $\psu(N)$.\\
If $P$ is $\psu(N)$ and $Q$ is $\psu(N')$, then $PQ$ is $\psu(N+N')$.
\end{lemma}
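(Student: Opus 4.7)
The plan is to verify the first statement directly from the definitions and to reduce the second statement to the multiplicativity of a particular positive basis of $\psu$ polynomials.

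For the sum, writing $P = \sum_i p_i q^i$ and $Q = \sum_i q_i q^i$, both $\psu(N)$, the coefficient symmetry $(P+Q)_i = p_i + q_i = p_{N-i} + q_{N-i} = (P+Q)_{N-i}$ and positivity are immediate. Unimodality about $N/2$ follows since both $(p_i)$ and $(q_i)$ are nondecreasing on $\{0,1,\dots,\lfloor N/2\rfloor\}$, hence so is their termwise sum.

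For the product, I would introduce for each $0\leq k\leq N/2$ the tent polynomial
\[
U_k^{(N)}(q) \coloneqq q^k + q^{k+1} + \cdots + q^{N-k} \;=\; q^k\,\qint{N-2k+1},
\]
which is manifestly $\psu(N)$. The key technical lemma is that every $P\in\psu(N)$ with coefficient sequence $(p_i)$ admits the positive expansion
\[
P \;=\; \sum_{k=0}^{\lfloor N/2\rfloor} (p_k - p_{k-1})\, U_k^{(N)}, \qquad p_{-1}\coloneqq 0,
\]
which I would verify coefficient by coefficient via telescoping, using the symmetry $p_i = p_{N-i}$; the coefficients $p_k - p_{k-1}$ are nonnegative thanks to unimodality. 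Combining this with the additivity established in the first part, it suffices to prove that $U_k^{(N)}\cdot U_\ell^{(N')}$ is $\psu(N+N')$ for every admissible pair $(k,\ell)$.

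The main content then lies in this final verification. I would write
\[
U_k^{(N)}\, U_\ell^{(N')} \;=\; q^{k+\ell}\cdot \qint{N-2k+1}\cdot \qint{N'-2\ell+1},
\]
and invoke the classical fact that $\qint{a}\cdot\qint{b}$ (say $a\leq b$) has the trapezoidal coefficient sequence $1,2,\dots,a,a,\dots,a,a-1,\dots,1$ of degree $a+b-2$, which is visibly $\psu(a+b-2)$. Multiplying by $q^{k+\ell}$ shifts both valuation and degree by $k+\ell$ while preserving unimodality and positivity; the identity $k+\ell + \tfrac{(N-2k)+(N'-2\ell)}{2} = \tfrac{N+N'}{2}$ confirms symmetry about $(N+N')/2$. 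The only nontrivial step is really the positive-expansion lemma, since once the $\psu(N)$ cone is described by the rays $\bR_{\geq 0}\,U_k^{(N)}$, closure under products reduces to a single trapezoidal computation, and closure under sums is the first part of the lemma.
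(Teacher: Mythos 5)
Your proof is correct. Note that the paper does not actually prove Lemma~\ref{lem:psu}; it is stated as a classical fact with a pointer to the literature, so there is no in-paper argument to compare against. What you have written is precisely the standard proof that the citation stands in for: the termwise argument for sums, and for products the decomposition of the $\psu(N)$ cone into nonnegative combinations of the ``tent'' polynomials $U_k^{(N)}=q^k\qint{N-2k+1}$, reducing everything to the trapezoidal shape of $\qint{a}\qint{b}$. All the computations check out: the telescoping expansion recovers $p_i$ on both halves using $p_i=p_{N-i}$, the coefficients $p_k-p_{k-1}$ are nonnegative by unimodality (and the terms where they vanish simply drop out harmlessly), and the center of $q^{k+\ell}\qint{N-2k+1}\qint{N'-2\ell+1}$ is $(N+N')/2$ as you verify. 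One minor point worth being explicit about: in the sum case the two polynomials may have different valuations and degrees (only $\deg+\val=N$ is shared), so the support of $P+Q$ is the larger of the two symmetric intervals about $N/2$; your monotonicity argument on $\{0,\dots,\lfloor N/2\rfloor\}$ handles this correctly, but it deserves a sentence. This is a complete and appropriately self-contained substitute for the external reference.
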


We introduce the \emph{reduced} remixed Eulerian numbers \cite{NT21}
\begin{equation}
\label{eq:Actilde}
\tilde{A}_{c}(q)\coloneqq \frac{A_{c}(q)}{\prod_j \qfact{m_j}}
\end{equation}
where $m_1,\dots,m_p$ are the cardinalities of the maximal intervals $I_1,\dots,I_p$ in $\supp(c)$, ordered from left to right. Using the notations $i,a,b,L_i(c),R_i(c)$ in the recurrence relations~\eqref{eq:recurrence_A} for ${A}_{c}(q)$, the $\tilde{A}_{c}(q)$ satisfy the modified recurrence relation (\cite[Proof of Prop. 5.4]{NT21})
\begin{equation}
\label{eq:recurrence_Areduced}
\tilde{A}_{c}(q)=b_L\;q^{i-a+1}\qint{b-i+1}\;\tilde{A}_{L_i(c)}(q)+b_R\;\qint{i-a+1}\;\tilde{A}_{R_i(c)}(q),
\end{equation} 

where $b_L,b_R$ are defined as follows: if $j$ is the index such that $I_j=[a,b]$, then
\begin{itemize}
\item if $j>1$ and $I_{j-1}=[f,a-2]$ for a certain $f$, then $b_L=\qbin{b-f+1}{b-a+2}$. Otherwise, $b_L=1$.
\item if $j<p$ and $I_{j+1}=[b+2,g]$ for some $g\leq r$, then  $b_R=\qbin{g-a+1}{b-a+2}$. Otherwise, $b_R=1$.
\end{itemize}

In any case, note that both $b_R$ and $b_L$ are $\psu(N)$ for some $N$ (in general distinct):  indeed the Gaussian binomial coefficient $\qbin{n}{k}$ is known to be $\psu(k(n-k))$.

\begin{theorem}
\label{th:Actilde_symmetric}
For any $c\in \wc{r}{}$, we have that $\tilde{A}_c(q)$ is $\psu\left(\binom{r}{2}-H(c)-\sum_j\binom{m_j}{2}\right)$. It follows that $A_c(q)$ is $\psu\left(\binom{r}{2}-H(c)\right)$. 
\end{theorem}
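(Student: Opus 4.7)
The plan is to prove the $\psu$ property for $\tilde{A}_c(q)$ by strong induction on $N(c) := \sum_{i=1}^{r}\binom{c_i}{2}$, using the recurrence~\eqref{eq:recurrence_Areduced} as the inductive tool; the statement for $A_c(q)$ then follows immediately by multiplication since $\qfact{m}$ is $\psu(\binom{m}{2})$ and one has $\binom{r}{2}-H(c)-\sum_j\binom{m_j}{2}+\sum_j\binom{m_j}{2}=\binom{r}{2}-H(c)$, with Lemma~\ref{lem:psu} handling the product. The base case $c=(1^r)$ is immediate: here $\tilde{A}_c(q)=1$, $H(c)=0$, $\supp(c)=[r]$ (so $m_1=r$), and the claimed parameter is $\binom{r}{2}-0-\binom{r}{2}=0$, as required.

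For the inductive step, fix $c$ with some $c_i\geq 2$ and apply~\eqref{eq:recurrence_Areduced} for that $i$, with $[a,b]$ the maximal interval of $\supp(c)$ containing $i$. Note $N(L_i(c))$ and $N(R_i(c))$ are each strictly smaller than $N(c)$, so the induction hypothesis applies to both terms whenever they are defined. Since each summand on the right-hand side of~\eqref{eq:recurrence_Areduced} is a product of terms each of which is individually $\psu$ (namely, $q^k$ is $\psu(2k)$, $\qint{k}$ is $\psu(k-1)$, $\qbin{n}{k}$ is $\psu(k(n-k))$, and the inductively-handled $\tilde{A}_{L_i(c)}$ and $\tilde{A}_{R_i(c)}$), Lemma~\ref{lem:psu} says each summand is $\psu$ of some parameter obtained as a sum. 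The crux is to verify that both of these sums equal the desired $N:=\binom{r}{2}-H(c)-\sum_j\binom{m_j(c)}{2}$; once established, a final application of Lemma~\ref{lem:psu} combines them.

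The key computations needed are telescoping identities for how $H$ and $\sum_j\binom{m_j}{2}$ transform under $L_i$ and $R_i$. A direct inspection of the definition of $h_j$ gives
\[
H(L_i(c))=H(c)+(i-a+1)\quad\text{and}\quad H(R_i(c))=H(c)-(b-i+1)
\]
when the respective compositions are defined. For $\sum_j\binom{m_j}{2}$, one splits into two cases for $b_L$: if $\supp(c)$ contains an interval $[f,a-2]$ immediately to the left of $[a,b]$, these merge in $L_i(c)$ into $[f,b]$, yielding a change of $mn+m+n$ where $m=a-1-f$, $n=b-a+1$; otherwise the interval just extends to $[a-1,b]$ for a change of $n$. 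Using that $b_L=\qbin{m+n+1}{n+1}$ or $1$ accordingly (so $b_L$ is $\psu(mn+m)$ or $\psu(0)$), and that $q^{i-a+1}\qint{b-i+1}$ contributes $2(i-a+1)+(b-i)$, the totals telescope to $N$ in both cases. The analogous analysis applies verbatim to the $R_i$-summand with $m'=b-a+1$ and $n'=g-b-1$, and the boundary cases $a=1$ or $b=r$ are handled simply by setting the ill-defined summand to zero (consistent with the convention in~\eqref{eq:recurrence_A}).

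The main obstacle is the careful bookkeeping across the four cases (existence or nonexistence of the left and right neighboring intervals in $\supp(c)$) — ensuring the contributions of $b_L$, $b_R$, the $q^{i-a+1}\qint{b-i+1}$ and $\qint{i-a+1}$ factors, and the recursive terms indeed cancel to yield the single target parameter $N$. No genuine difficulty beyond algebra, but a clean tabular presentation of the four cases will be needed to keep the arithmetic honest.
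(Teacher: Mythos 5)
Your proposal is correct and follows essentially the same route as the paper: induction via the reduced recurrence~\eqref{eq:recurrence_Areduced}, Lemma~\ref{lem:psu} for sums and products of $\psu$ polynomials, and bookkeeping of how $H$ and $\sum_j\binom{m_j}{2}$ change under $L_i$ and $R_i$ (your telescoping identities match the paper's, which derives the $H$-shifts equivalently via $d_c+D_c$). The only cosmetic difference is your induction measure $\sum_i\binom{c_i}{2}$ in place of the paper's $|c|-|\supp(c)|$; both strictly decrease, so this changes nothing.
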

\begin{proof}
We establish some notation for convenience.
Suppose the lengths of the maximal intervals in $\supp(c)$ are $m_1$ through $m_p$. We let 
\begin{align}
n(c)\coloneqq \sum_{1\leq j\leq p}\binom{m_j}{2}.
\end{align}

We have $A_{c}=\tilde{A}_{c}\prod_j \qfact{m_j}$, and each $\qfact{m_j}$ is $\psu(\binom{m_j}{2})$. 
By Lemma~\ref{lem:psu}, the result for $A_c$ thus follows from the one for $\tilde{A}_{c}$.  We proceed to prove the latter by induction on $e(c)\coloneqq |c|-|\supp(c)|$.

The base case corresponds to $c=(1^r)$, for which $\tilde{A}_{c}(q)=1$ and the claim is clearly true.
Now assume $e(c)>0$, and consider the recurrence~\eqref{eq:recurrence_Areduced} for a fixed $i$ with $c_i\geq 2$: we retain the notations $a,b,L_i(c),R_i(c)$.
Note first that $d_c+D_c=\binom{r}{2}-H(c)$ by summing the expressions in~\eqref{eq:valuation} and~\eqref{eq:degree}. 
It then follows from the description of $h(L_i(c))$ and $h(R_i(c))$ given in the proof of~\eqref{eq:valuation} that $d_{L_i(c)}+D_{L_i(c)}=d_c+D_c-(i-a+1)$ and $d_{R_i(c)}+D_{R_i(c)}=d_c+D_c+(b-i+1)$.

Since $e(L_i(c))=e(R_i(c))=e(c)-1$,  we can apply induction to conclude that $\tilde{A}_{L_i(c)}(q)$ and $\tilde{A}_{R_i(c)}(q)$  are $\psu\left(\binom{r}{2}-H(L_i(c))-n(L_i(c))\right)$ and $\psu\left(\binom{r}{2}-H(R_i(c))-n(R_i(c))\right)$ respectively. 
Let $B_i$ and $C_i$ denote the left and right summands on the right-hand side of ~\eqref{eq:recurrence_Areduced}. 
It will suffice to show that $B_i$ and $C_i$ are both $\psu\left(\binom{r}{2}-H(c)-n(c)\right)$; by Lemma~\ref{lem:psu} so is their sum $A_c$ which then completes the proof.

We focus on $B_i$, the proof for $C_i$ being entirely similar. 
Assume first that $b_L=1$. Then 
\begin{align}
\label{eq:intermediate psu b_i}
\deg(B_i)+\val(B_i)&=2(i-a+1)+b-i+\binom{r}{2}-H(L_i(c))-n(L_i(c))\nonumber\\
&=b+i-2a+2+\binom{r}{2}-H(L_i(c))-n(L_i(c)).
\end{align}
Note that $H(L_i(c))-H(c)=i-a+1$.
Since $b_L=1$, we know that $n(L_i(c))-n(c)=b-a+1$.
Thus we may rewrite~\eqref{eq:intermediate psu b_i} as
\begin{align}
\label{eq:desired}
\deg(B_i)+\val(B_i)=\binom{r}{2}-H(c)-n(c),
\end{align}
which disposes off the case $b_L=1$.

To finish this proof, we finally consider the case where $b_L=\qbin{b-f+1}{b-a+2}$.
This time we get
\begin{align}
\deg(B_i)+\val(B_i)&=b+i-2a+2+(a-f-1)(b-a+2)+\binom{r}{2}-H(L_i(c))-n(L_i(c))\nonumber\\
&=i-a+(a-f)(b-a+2)+\binom{r}{2}-H(L_i(c))-n(L_i(c)). \label{eq:intermediate psu b_i take 2}
\end{align}
Like before, the equality $H(L_i(c))-H(c)=i-a+1$ holds. Unlike before, we have
\begin{align}
n(L_i(c))-n(c)&=\binom{b-f+1}{2}-\binom{a-f-1}{2}-\binom{b-a+1}{2}\\\nonumber 
&=(a-f)(b-a+2)-1.
\end{align}
We leave it to the reader to put the pieces together and conclude that~\eqref{eq:desired} holds.
\end{proof}

\begin{example}
Recall that we computed $A_c(q)$ for $c=(0,3,0,0,0,1,3)$  in Example~\ref{ex:A_c example}, and it is seen to be $\psu(26)$, which is in accordance with Theorem~\ref{th:Actilde_symmetric}.  Indeed, $\binom{7}{2}-H(c)=21+5=26$.
\end{example}

\section{$q$-volumes and a dissection of the permutahedron}
\label{sec:volumes}

In this section and the next we will give a second combinatorial interpretation of $A_c(q)$ after the one in Section~\ref{sec:postnikov_trees}. As we will show in Section~\ref{sec:A_c bilabeled}, it can be interpreted as extending the one given by Liu~\cite{Liu16} for $q=1$. In order to interpret the parameter $q$, we will use a decomposition of the permutahedron into cubes   which is of independent interest.

\subsection{The polytopes $\cube_\lambda(u)$}
\label{sub:cubes}

Fix $\lambda\in \mathbb{R}^{r+1}=(\lambda_1,\lambda_2,\dots,\lambda_{r+1})$ such that $\lambda_i\geq \lambda_{i+1}$ for $i=1,\ldots,r$. For the next definition, we embed $\sgrp_{r}$ into $\sgrp_{r+1}$ as usual by treating $r+1$ as a fixed point. 

\begin{definition}[$I_u$ and $\cube_\lambda(u)$] For $u\in\sgrp_{r}$, consider the interval $I_u=[u,s_1s_2\cdots s_{r}u]$ in the Bruhat order on $\sgrp_{r+1}$. The polytope $\cube_\lambda(u)$ is defined as the convex hull in $\bR^{r+1}$  of the points $\lambda_v$ for $v\in I_u$.
\end{definition}

We multiply permutations from right to left, so if $u=u_1,\ldots,u_r,r+1$ in one-line notation, then $s_1s_2\cdots s_{r}u=u_1+1,\ldots,u_r+1,1$.

\begin{example}
For $r=1$, we have $u=1$ which $I_1=[12,21]=\{12,21\}\subseteq\sgrp_2$.\\
 For $r=2$,  we get $I_{12}=[123,231]=\{123,213,132,231\}$ and $I_{12}=[213,321]=\{213,231,312,321\}$. \\
For $r=3$ and $\lambda=(3,2,1,0)$ the six polytopes $\cube_\lambda(u)$ for $u\in\sgrp_r$ are illustrated at the bottom left of Figure~\ref{fig:3slicing}. 
They are all \emph{Bruhat interval polytopes} introduced by Tsukerman--Williams~\cite{TW15}.
\end{example}

\begin{figure}
\includegraphics[scale=0.5]{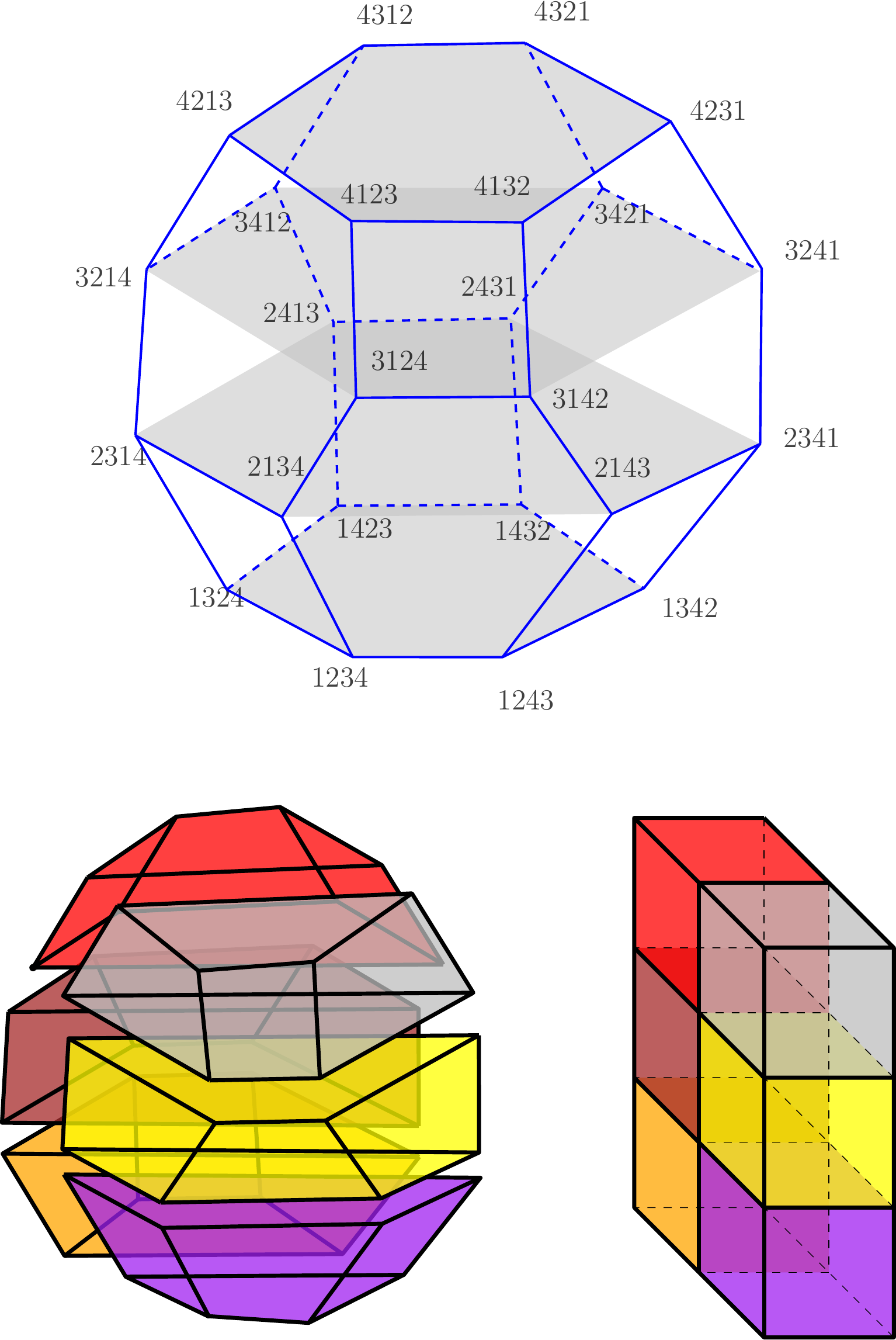}
\caption{Slicing of the three dimensional permutahedron (top), its full dissection into cubes (bottom left)  and the associated cubical complex (bottom right).\label{fig:3slicing}}
\end{figure}

\begin{remark}
Recall that the $\lambda_v$ for $v\in\sgrp_{r+1}$ are the vertices of the permutahedron $\Perm(\lambda)$. It follows that the $\lambda_v$ for $v\in I_u$ are the vertices of $\cube_\lambda(u)$ for any $u\in\sgrp_r$.
\end{remark}

This description of $\cube_{\lambda}(u)$, while possessing the virtue of brevity, is not entirely convenient in practice as it requires listing elements of $I_u$. 
We now proceed to give another perspective on the intervals $I_u$ which is more enlightening. 

Recall that the \emph{code} $\code(w)$ of a permutation $w\in \sgrp_{r+1}$ is the weak composition $(c_1,\dots,c_{r+1})$ where $c_i=|\{j>i\suchthat w_i>w_j\}|$. For instance $\code(23514)=(1,1,2,0,0)$. It sends $\sgrp_{r+1}$ bijectively to the sequences $(c_1,\dots,c_{r+1})$ such that $0\leq c_i\leq r+1-i$ for all $i$.

\begin{lemma}
For any $u\in\sgrp_{r}$, the Bruhat interval $I_u$ of $\sgrp_{r+1}$ is isomorphic to the Boolean lattice $\mathbb{B}_{r}$ of cardinality $2^r$.
\end{lemma}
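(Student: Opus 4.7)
The plan is to exhibit an order-preserving bijection $I_u \to \mathbb{B}_r$ via the Lehmer code. First, I would compute the codes at the endpoints. Since $cu$ has one-line notation $(u_1+1, \ldots, u_r+1, 1)$, a direct count of inversions yields $\code(cu) = \code(u) + (1, 1, \ldots, 1, 0)$: uniformly incrementing the first $r$ values of $u$ preserves their internal comparisons, while the new value $1$ appearing at position $r+1$ creates exactly one new inversion with each earlier position. This identifies the ``box''
\[
B_u := \prod_{i=1}^{r} \{\code(u)_i,\, \code(u)_i + 1\} \times \{0\},
\]
which has $2^r$ elements and is naturally identified with $\mathbb{B}_r$ under componentwise order via the correspondence $\code(v) \leftrightarrow S(v) := \{i \in [r] : \code(v)_i = \code(u)_i + 1\}$.

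Next, I would show that the code map restricts to a bijection between $I_u$ and $B_u$. Applying the tableau criterion for Bruhat order to the chain $u \le v \le cu$ yields componentwise inequalities on the sorted $k$-prefixes of $v$ which differ from those of $u$ by at most $(1, \ldots, 1)$. Translating these into inversion counts gives $\code(v)_i \in \{\code(u)_i, \code(u)_i + 1\}$ for $i \in [r]$ and $\code(v)_{r+1} = 0$, that is, $\code(v) \in B_u$. Conversely, any permutation whose code lies in $B_u$ satisfies the tableau criterion for $u \le v \le cu$. Since the code map is a bijection on $\sgrp_{r+1}$, this yields $|I_u| = 2^r$.

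Finally, I would verify that Bruhat order on $I_u$ matches componentwise order on $B_u$. The forward direction is immediate: if $v \lessdot v'$ is a cover in $I_u$, then $\ell(v') - \ell(v) = 1$, and since each coordinate in $B_u$ ranges over only two values, $\code(v')$ must exceed $\code(v)$ by $+1$ at exactly one coordinate. For the reverse direction, given $\code(v) \le \code(v')$ componentwise in $B_u$, I would construct a saturated chain from $v$ to $v'$ in $I_u$ by activating the bits in $S(v') \setminus S(v)$ one at a time, each step realized by an explicit transposition whose effect on the code is a $+1$ at the desired coordinate.

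The main obstacle will be the reverse direction of the order isomorphism: constructing the saturated chain requires identifying, at each step, a transposition that lifts the current permutation by exactly one inversion at the correct code coordinate and keeps the result in $I_u$. While the box structure of $B_u$ makes the existence of such a cover plausible (and the forward direction forces the rank statistics to agree), pinning down the precise transposition at each step requires a finer case analysis of how cover relations interact with the code restricted to $B_u$.
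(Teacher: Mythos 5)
Your proposal follows essentially the same route as the paper: identify $I_u$ with the box of codes $\{\code(u_-)+\mbf{e}_S \suchthat S\subseteq[r]\}$ (where $u_-=u_1,\dots,u_r,r{+}1$) and verify membership and order via the tableau criterion. The only divergence is that you treat the reverse direction of the order isomorphism as an obstacle requiring explicit saturated chains of transpositions; since the tableau criterion is an if-and-only-if, comparing sorted prefixes (equivalently, codes, within this box) yields both directions at once, which is precisely the verification the paper leaves to the reader.
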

\begin{proof}

Denote the bottom and top element of $I_u$  respectively by $u_-=u_1,\dots,u_r,r+1$ and $u_+=u_1+1,\dots,u_r+1,1$ in one-line notation. 

Given $S\subset [r]$, let $\mbf{e}_S\in \bR^{r+1}$ denote the indicator vector of $S$, i.e. $\mbf{e}_S$ equals the sum of the standard basis vectors $e_i$ (in $\bR^{r+1}$)  for $i\in S$. Define
\[
J_u \coloneqq\{c=(c_1,\ldots,c_{r+1})\suchthat c=\code(u_-)+\mbf{e}_S \text{ for } S\subset [r]\}. 
\]

We claim that $v\in I_u$  (i.e $u_-\leq v\leq u_+$ in Bruhat order) if and only $\code(v)\in J_u$, and that this gives a poset isomorphism (the order on weak compositions is componentwise). Note that this proves the lemma since $J_u$ is clearly order isomorphic to $\mathbb{B}_r$. 

We have $\code(u_+)=\code(u_-)+\mbf{e}_{[r]}$ as is checked immediately. Therefore $J_u$ consists of all weak compositions $c$ such that $\code(u_-)\leq c \leq \code(u_+)$.

We leave it to the reader to check that the full claim follows from invoking the following well-known \emph{tableau criterion}: $u\leq v$ in Bruhat order if for any $i$, the weakly increasing rearrangement of $u_1\ldots u_i$ is  smaller, componentwise, than the weakly increasing rearrangement of $v_1\ldots v_i$. In view of this, the condition $u_-\leq v\leq u_+$ translates to exactly 2 choices for each $v_i$ as $i$ goes from $1$ through $r$.
\end{proof}

\subsection{The cube decomposition}
\label{sub:cube_decomposition}

We recall the classical bijection $u\mapsto \mathrm{T}(u)$ between $\sgrp_{r}$ and {\em decreasing binary trees}. $\mathrm{T}(u)$ is defined more generally for $u$ a word with distinct letters in $\bZ_{>0}$. 
Assume $u=u_LMu_R$ where $M$ is the maximal integer in $u$. 
Then $\mathrm{T}(u)$ is constructed recursively as the binary tree with root label $M$ whose left (resp. right) subtree is $\mathrm{T}(u_L)$ (resp. $\mathrm{T}(u_R)$). 
For instance, the tree in Figure~\ref{fig:dec_tree} corresponds to the permutation $u=47128635$. 
\begin{figure}[!ht] 
\includegraphics[scale=1.5]{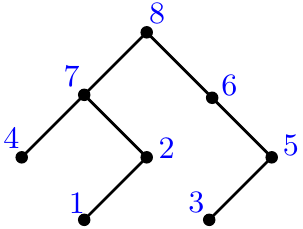}
\caption{Decreasing tree $\mathrm{T}(u)$  for $u=47128635$.}
\label{fig:dec_tree}
\end{figure}

Given $u\in \sgrp_{r}$ and $i\in [r]$, let $L(u,i)\coloneqq\{i-f_i,\dots,i-1\}$ and $R(u,i)\coloneqq\{i+1,\dots,i+g_i\}$, where $f_i,g_i\geq 0$ are maximal such that $u_j<u_i$ for $j\in L(u,i)\cup R(u,i)$. These are the labels of the descendants of the node $i$ in $\mathrm{T}(u)$: more precisely, $L(u,i)$ (resp. $R(u,i)$) is the set of labels of the left (resp. right) subtree of that node.\smallskip

{\em We now assume $\lambda_i>\lambda_{i+1}$ for $i=1,\ldots,r$}. 
Although the next theorem can be adapted for cases with identical values, we do not need it since we will be interested in the volume of $\Perm(\lambda)$, for which we will obtain polynomial formulas in the $\lambda_i$ that will still be valid in the general case.

\begin{theorem}
\label{th:H_description}
The $r!$ polytopes $\{\cube_\lambda(u)\}_{u\in\sgrp_{r}}$ are combinatorial cubes that are the maximal faces of a polyhedral subdivision of $\Perm(\lambda)$.

The facet description of $\cube_\lambda(u)$ is given by
 \begin{align}
 \label{eq:facet description1}
\emph{\text{(left)}} \hspace{5mm} t_{u_i}+\sum_{j\in L(u,i)}t_{u_j}&\leq \lambda_{i-f_i}+\dots+\lambda_i; \\
 \label{eq:facet description2}
\emph{\text{(right)}} \hspace{5mm} t_{u_i}+\sum_{j\in R(u,i)}t_{u_j}&\geq \lambda_{i+1}+\dots+\lambda_{i+g_i+1},
 \end{align}
 for $i=1,\ldots,r$.
\end{theorem}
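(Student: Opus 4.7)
The plan is to establish the theorem in three steps: the cube structure, the facet description, and the subdivision property.

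First, I would pin down the vertices $\lambda_v$ for $v \in I_u$ explicitly. The preceding lemma identifies $I_u$ with the Boolean lattice $\mathbb{B}_r$ via $S \mapsto v_S$ where $\code(v_S) = \code(u_-) + \mathbf{e}_S$. Unwinding the code-to-permutation bijection and tracing the recursive structure of the decreasing tree $T(u)$, I would express $\lambda_{v_S}$ position by position: the value at the position indexed by a node $i$ of $T(u)$ depends only on the $S$-states of $i$ together with its proper ancestors, each of which either "pushes" a value up or leaves it fixed. This produces $2^r$ distinct vertices for which toggling a single element of $S$ moves along an edge of the polytope, establishing the combinatorial cube structure.

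Second, I would verify that the inequalities \eqref{eq:facet description1} and \eqref{eq:facet description2} are facet-defining. For the left inequality at $i$, the sum $t_{u_i} + \sum_{j \in L(u,i)} t_{u_j}$ aggregates the coordinates of $\lambda_{v_S}$ at the $f_i+1$ positions forming node $i$ together with its left subtree. From the explicit vertex description, this sum equals the upper bound $\lambda_{i-f_i} + \cdots + \lambda_i$ precisely when the $S$-states of $i$ and its left descendants lie in a specific tight configuration; otherwise the sum is strictly smaller. Tightness thus occurs on a sub-Boolean lattice of size $2^{r-1}$, i.e.\ on a codimension-one face. A symmetric analysis applies to \eqref{eq:facet description2}, and the two inequalities at a given $i$ are tight on complementary halves of the vertex set. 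This yields $r$ pairs of opposite facets, matching the face structure of an $r$-cube exactly and confirming the facet description.

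Third, the hardest step: I would show that $\{\cube_\lambda(u)\}_{u \in \sgrp_r}$ tiles $\Perm(\lambda)$, proceeding by induction on $r$ along the lines of the slicing depicted in Figure~\ref{fig:3slicing}. Slicing $\Perm(\lambda)$ by an appropriate family of parallel hyperplanes produces slabs that are combinatorial prisms over a smaller permutahedron $\Perm(\lambda')$ (obtained from $\lambda$ by removing a suitable entry); by induction each such smaller permutahedron is subdivided into $(r-1)!$ cubes, and the prism structure multiplies this by $r$ for a total of $r!$ cubes with disjoint interiors whose union is $\Perm(\lambda)$. Matching these inductively-constructed cubes to the $\cube_\lambda(u)$ uses the decreasing tree: the root of $T(u)$ encodes the slab, and the subtrees govern the inductive refinement. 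The main obstacle is precisely this alignment step: showing that the cube produced by the induction for a given combinatorial choice agrees as a polytope with $\cube_\lambda(u)$, and that the inductive slicing is compatible with the tree-based facet description at all levels. This reduces to a consistency check about how $\code(u)$ and $T(u)$ transform under deletion of a letter, which is a routine but careful combinatorial verification.
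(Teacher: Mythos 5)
Your third step is precisely the paper's proof: the authors slice $\Perm(\lambda)$ into the slabs $\lambda_{i+1}\leq x_1\leq\lambda_i$, invoke Liu's description of the sections $\Perm(\lambda)\cap\{x_1=t\}$ as smaller permutahedra (this is the key external input, \cite[Proposition 3.7]{Liu16}, which is what makes each slab a combinatorial prism), and induct to get the $r!$ cubes; the identification of these with the $\cube_\lambda(u)$ and the facet description are then declared ``easily checked inductively.'' So for the subdivision claim you are on the same route, and you correctly flag the alignment of the inductive cubes with the Bruhat-interval definition as the real work that remains. Where you differ is in steps 1--2: you propose to verify the combinatorial cube structure and the facet inequalities directly from the Boolean-lattice parametrization $\code(v_S)=\code(u_-)+\mathbf{e}_S$ of the vertices, rather than extracting them from the slicing induction. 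That is a legitimate and arguably more self-contained route (it decouples the facet description from the subdivision), but be aware of one subtlety: $\lambda_{v_S}$ is \emph{not} an affine function of $\mathbf{e}_S$ (already for $r=2$ the four vertices do not form a parallelogram), so ``toggling one element of $S$ gives an edge'' cannot be read off from the vertex list alone; you genuinely need the facet inequalities of step 2, together with a check that they cut out exactly the convex hull of the $2^r$ points, before the cube structure follows. With that ordering of the argument made explicit, your plan is sound and fills in detail the paper leaves to the reader.
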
 

\begin{proof}
Recall that we assume $\lambda_i>\lambda_{i+1}$ for $i=1,\ldots,r$. We have that $\Perm(\lambda)$ is contained between the hyperplanes $x_1=\lambda_1$ and $x_1=\lambda_{r+1}$. 

Now fix $i$, $1\leq i\leq r$, and consider the slice $P_i\coloneqq \Perm(\lambda_1,\dots,\lambda_n)\cap \{\lambda_i\geq x_1\geq \lambda_{i+1}\}$. The sections $S_t\coloneqq\Perm(\lambda_1,\dots,\lambda_n)\cap\{x_1=t\}$ of $P_i$  for $\lambda_i\geq t\geq \lambda_{i+1}$ are explicitly described in Liu's work \cite[Proposition 3.7]{Liu16}:
\[S_t=\{t\}\times\Perm(\lambda_1,\dots,\lambda_{i-1},\lambda_i+\lambda_{i+1}-t,\lambda_{i+2},\ldots,\lambda_n)\]
 It follows from that result that the slice $P_i$ is in particular combinatorially equivalent to the product of $[0,1]$ with a permutahedron of dimension one less. Proceeding inductively, one can decompose into cubes the two permutahedra that appear as intersections with the hyperplanes $x_1=\lambda_i$ and $x_1=\lambda_{i+1}$. By taking the direct product with $[0,1]$, we obtain the decomposition of $\Perm(\lambda)$ into cubes.

It is then easily checked inductively that these cubes are precisely the cubes $\cube_{\lambda}(u)$, and that the facet description is as described in ~\eqref{eq:facet description1},\eqref{eq:facet description2}. 
See Figure~\ref{fig:3slicing} for an illustration of the decomposition in the case of the three dimensional standard permutahedron.
\end{proof} 

\begin{remark}
\label{remark:pitman-stanley}
When $u=id$ the $\cube_{\lambda}(u)$ is the Pitman--Stanley polytope~\cite{Stanley-Pitman}, and the latter is well known to be a combinatorial cube. As we shall see in the next section, this is directly related to setting $q=0$ in our setting. 
\end{remark}

\begin{remark}
\label{remark:generalized_permutahedron}
For any $\lambda,u$, the polytope $\cube_{\lambda}(u)$ is a \emph{generalized permutahedron} \cite{Pos09}. One needs to check that each edge of $\cube_{\lambda}(u)$ is parallel to $e_i-e_j$ for some $i,j$. Now we know that the face poset of $\cube_{\lambda}(u)$ corresponds to an interval in the Bruhat order, so adjacent vertices of $\cube_{\lambda}(u)$ have coordinates $\lambda_v$ and $\lambda_{v'}$ where $v$ covers $v'$ in Bruhat order, which implies that $\lambda_v-\lambda_{v'}$ is parallel to $e_i-e_j$ for some $i,j$ as desired.
\end{remark}
 
\subsection{$q$-volume}
We now recall the notion of $q$-volumes introduced by the authors 
\cite[\S9]{NT21}. 
Given $\lambda$, we define the $q$-volume as
\begin{align}
V^q(\lambda)=\frac{1}{\qfact{r}} \ds{(\lambda_1x_1+\cdots +\lambda_{r+1}x_{r+1})^{r}}_{r+1}^q.
\end{align}
The remixed Eulerians arise as follows:
\begin{align}
\label{eq:vol_q_remixed}
V^q(\lambda)&=
 \sum_{c\in \wc{r}{}}A_{c}(q)\;\frac{(\lambda_1-\lambda_2)^{c_1}}{c_1!}\cdots \frac{(\lambda_{r}-\lambda_{r+1})^{c_{r}}}{c_r!}.
\end{align}
At $q=1$, this recovers $V^{1}(\lambda)=\vol(\Perm(\lambda))$ \cite{Pos09}. From the subdivision of $\Perm(\lambda)$ into cubes $(\cube_{\lambda}(u))_{u\in \sgrp_r}$ it follows that 
\begin{align}
V^{1}(\lambda)=\vol(\Perm(\lambda))=\sum_{u\in \sgrp_r} \vol(\cube_{\lambda}(u)).
\end{align}
Theorem~\ref{th:q-volume-interpretation} $q$-deforms this equality by weighing  each summand by  $q^{\ell(u)}$.

As in \cite[\S6]{HarHor18}, to each $u\in \sgrp_r$ one can associate a Richardson variety 
\begin{equation}
R(u)=X_{1\times w_o^{r}u}\cap X^{u} 
\label{eq:richardson}
\end{equation}
in the variety of complete flags of $\bC^{r+1}$. 
Its cohomology class can be represented by the product of Schubert polynomials $\schub{u}\schub{1\times w_o^{r}u}$.

\begin{theorem}
\label{th:q-volume-interpretation}
For any $\lambda$, we have
\begin{equation}
\label{eq:q-volume-interpretation}
V^q(\lambda)=\sum_{u\in \sgrp_r}q^{\ell(u)} \,\vol(\cube_\lambda(u)).
\end{equation}
\end{theorem}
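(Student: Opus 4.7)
The strategy is to expand both sides of \eqref{eq:q-volume-interpretation} as polynomials in the differences $\mu_i=\lambda_i-\lambda_{i+1}$ and match coefficients of $\mu^c/c!$ for $c\in \wc{r}{}$. By \eqref{eq:vol_q_remixed}, the coefficient of $\mu^c/c!$ on the left-hand side is exactly $A_c(q)$. Writing $\vol(\cube_\lambda(u))=\sum_{c}N(u,c)\,\mu^c/c!$ with $N(u,c)\in\bZ_{\geq 0}$ independent of $\lambda$ and $q$, the coefficient on the right-hand side is $\sum_{u\in \sgrp_r}q^{\ell(u)}N(u,c)$. Thus the theorem reduces to the identity
\[
A_c(q)=\sum_{u\in \sgrp_r} q^{\ell(u)}\,N(u,c).
\]

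The first step is to compute the integers $N(u,c)$ explicitly. By iterating the slicing argument used in the proof of Theorem~\ref{th:H_description} --- which presents $\cube_\lambda(u)$ as a nested prism reflecting the recursive structure of the decreasing tree $\mathrm{T}(u)$ --- one obtains $\vol(\cube_\lambda(u))$ as a positive combination of monomials $\mu^c/c!$ whose coefficients $N(u,c)$ admit a clean combinatorial reading. Evaluating at $q=1$ must recover Liu's identity $A_c=\sum_u N(u,c)$ from \cite[Theorem 4.1]{Liu16}, with pairs $(u,\text{admissible filling})$ corresponding to his $C$-compatible permutations; this both serves as a sanity check and pins down $N(u,c)$ concretely.

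The second step is to promote the $q=1$ identity to its $q$-weighted refinement. One natural route is to set up an induction based on the recurrence~\eqref{eq:recurrence_A} (or its reduced version~\eqref{eq:recurrence_Areduced}) for $A_c(q)$, and to verify that the cube-theoretic sum $\sum_{u}q^{\ell(u)}N(u,c)$ satisfies the same recursion. Concretely, for each index $i$ with $c_i\geq 2$, one exhibits a weight-preserving partition of $\{u\in \sgrp_r : N(u,c)>0\}$ into two classes corresponding to $L_i(c)$ and $R_i(c)$, with the $q$-shifts $q^{i-a+1}\qint{b-i+1}$ and $\qint{i-a+1}$ arising as lengths of the two Bruhat chains resulting from a canonical left/right split at the node labelled $i$ in $\mathrm{T}(u)$. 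A more conceptual alternative uses the Richardson-variety description~\eqref{eq:richardson}: since $\vol(\cube_\lambda(u))$ equals the degree of $\schub{u}\schub{1\times w_o^r u}$ paired with the divisor class $(\lambda_\bullet)^r/r!$, the goal would be to identify the $q$-weighted sum $\sum_u q^{\ell(u)}\schub{u}\schub{1\times w_o^r u}$ with a distinguished element of the Klyachko algebra from Section~\ref{sub:coeffs_klyachko} whose coefficients, via~\eqref{eq:alg perspective}, reproduce $A_c(q)$.

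The main obstacle is matching the grading by $\ell(u)$ with the $q$-grading of $A_c(q)$. At the inductive level, the delicate point is to prove that the canonical left/right split at the ``active'' node of $\mathrm{T}(u)$ produces exactly the $q$-shifts dictated by~\eqref{eq:recurrence_A} or~\eqref{eq:recurrence_Areduced}, and to handle the boundary cases (when $a=1$ or $b=r$) so that they match the ill-defined terms on the right of those recurrences. Establishing this bijective correspondence cleanly --- or, via the Schubert route, identifying the correct Klyachko-algebra element --- is the crux of the argument.
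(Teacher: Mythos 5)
Your reduction of the theorem to the identity $A_c(q)=\sum_{u\in\sgrp_r}q^{\ell(u)}N(u,c)$ is sound, and checking it at $q=1$ against Liu's theorem is a reasonable consistency test. But the entire content of the statement --- why $q^{\ell(u)}$ is the correct weight on $\vol(\cube_\lambda(u))$ --- is exactly the step you defer in both of your proposed routes, so as written this is a plan with the crux left open rather than a proof. Route (a) would require actually constructing, for each $i$ with $c_i\geq 2$, the weight-preserving correspondence realizing the recurrence~\eqref{eq:recurrence_A} on the cube side; you describe what it should look like but do not exhibit it, and this is not a routine verification (the paper records such a slicing/recurrence proof only as a desirable alternative, via the integral recursion $v_q(\mu)=\sum_i q^{i-1}\int_0^{\mu_i}\cdots$, and notes it was worked out separately by Kim).

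Route (b) points toward the paper's actual argument but stops short of the key identity and slightly misidentifies the target. What is needed is not an element of the Klyachko algebra but the observation that the kernel $P_r=\prod_{1\leq i<j\leq r}(qx_i-x_{j+1})$ in the definition~\eqref{eq:qds} of the $q$-divided symmetrization is a specialization of the double Schubert polynomial $\schub{w_o}(x,y)$, so that Cauchy's formula yields $P_r=\sum_{u\in\sgrp_r}q^{\ell(u)}\schub{u}\schub{1\times w_o^{r}u}$. Substituting this into $V^q(\lambda)=\frac{1}{\qfact{r}}\,\partial_{w_o}\bigl((\sum_i\lambda_ix_i)^rP_r\bigr)$ and invoking the result of Harada et al.\ that the volume polynomial of the Richardson variety $R(u)$, whose class is represented by $\schub{u}\schub{1\times w_o^{r}u}$, coincides with $\vol(\cube_\lambda(u))$ finishes the proof directly; no extraction of coefficients in the $\mu_i$ and no explicit computation of $N(u,c)$ is required. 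Until you either supply that Cauchy-formula identity (plus the geometric input identifying $\mathrm{Vol}_\lambda(R(u))$ with $\vol(\cube_\lambda(u))$) or carry out the bijective induction of route (a), the argument has a genuine gap at its central step.
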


\begin{proof} By definition of qDS~\eqref{eq:qds}, we have
\[V^q(\lambda)=\frac{1}{\qfact{r}}\;\partial_{w_o}((\sum_i\lambda_ix_i)^rP_r)\]
with $P_r=\prod_{1\leq i<j\leq r}(qx_i-x_{j+1})$. 
Now we recognize $P_r$ as the specialization of a double Schubert polynomial and get the expansion
\[P_r=\sum_{u\in\sgrp_r}q^{\ell(u)}\schub{u}\schub{1\times w_o^{r}u},\]
using Cauchy's formula~\cite{Man01}. Then $\schub{u}\schub{1\times w_o^{r}u}$ as representing the class of the Richardson variety $R(u)$, and so the formula for $V^q(\lambda)$ above gives
\[V^q(\lambda)=\sum_uq^{\ell(u)}\mathrm{Vol}_\lambda(R(u))\]
and conclude once again with \cite[Remark 6.5]{HarHor18}: $\mathrm{Vol}_{\lambda}(R(u))$ denotes the volume polynomial attached to the variety $R(u)$ which is shown to coincide with $\vol(\cube_\lambda(u))$.
\end{proof}

Theorem~\ref{th:q-volume-interpretation} further justifies dubbing $V^q(\lambda)$ a $q$-volume. Based on this result and the expansion~\eqref{eq:volume_perm_mu_expansion}, we give a combinatorial interpretation of remixed Eulerian numbers in Section~\ref{sec:A_c bilabeled}. 
We end this section with some remarks.

\begin{remark}
Setting $q=0$ in ~\eqref{eq:q-volume-interpretation}, we have that $V^0(\lambda)$ is the volume of $\cube_\lambda(id)$. This polytope is the Pitman--Stanley polytope as noticed in Remark~\ref{remark:pitman-stanley}. Using Remark~\ref{remark:constant_term}, it is then immediate that the expansion~\eqref{eq:vol_q_remixed} at $q=0$ recovers the well-known expansion in~\cite[Theorem 1]{Stanley-Pitman}.
\end{remark}

\begin{remark}
It would be nice to have a more direct proof of the previous result, avoiding the use of the Richardson variety $R(u)$. A possibility would be to show by a direct computation that
\[v_q(\mu)=\sum_{i=1}^{r}q^{i-1}\int_{0}^{\mu_i}v_q(\mu_1,\dots,\mu_{i-2},\mu_{i-1}+\mu_i-t,t+\mu_{i+1},\mu_{i+2},\dots,\mu_r) \,\mathrm{d}t
\]
where $v_q(\mu)$ is the expression of $V^q(\lambda)$ in terms of $\mu_i=\lambda_{i}-\lambda_{i+1}$.\footnote{In personal communication, Jang Soo Kim has found a proof following this route.} Indeed this recurrence relation is satisfied by the right hand side in Theorem~\ref{th:q-volume-interpretation}, by slicing the permutahedron as in the proof of Theorem~\ref{th:H_description}. This is in fact the $q$-deformation of  \cite[Proposition 3.9]{Liu16}; we come back to this in Section~\ref{sub:liu}.
\end{remark}

\begin{remark}
Let $\mc{T}_{r}$ denote the set of (rooted) complete binary trees with $r$ internal nodes.
Fix $T\in \mc{T}_{r}$.
We denote by $\mc{S}_T$ the set of all permutations in $\sgrp_{r}$ whose decreasing tree (completed so that it has $r+1$ unlabeled leaves) has underlying shape $T$.
Theorem~\ref{th:H_description} implies that as $u$ ranges over $\mc{S}_T$, the cubes $\cube_{\lambda}(u)$ are all congruent.
Thus we may without any harm consider our cubes to be indexed by trees in $\mc{T}_{r}$. 

Endow $T$ with the binary search labeling.
This has the advantage that rewriting inequalities in Theorem~\ref{th:H_description} with the new coordinates results in inequalities of the form $t_{i}+\cdots+t_j \geq \lambda_{i+1}+\cdots + \lambda_{j+1}$ or $t_{i}+\cdots+t_j\leq \lambda_{i}+\cdots+\lambda_{j}$.
It follows from \cite[Definition 3.3]{LP20} that $\cube_{\lambda}(T)$ is an alcoved polytope.
Since $\cube_{\lambda}(T)$ is also a generalized permutahedron, we infer that $\cube_{\lambda}(T)$ is a polypositroid \cite[Definition 3.8]{LP20}. In particular, each $\cube_{\lambda}(u)$ is obtained by hitting $\cube_{\lambda}(T)$ by the permutation $u$.
\end{remark}

\begin{remark}
In view of the preceding remark, we may compactify the $V^q(\lambda)$ as a sum  over $\mc{T}_{r}$:
\begin{align}
V^q(\lambda)=\sum_{u\in \sgrp_{r}} q^{\ell(u)}\vol(\cube_{\lambda}(u))=\sum_{T\in \mc{T}_{r}} \vol(\cube_{\lambda}(T))\sum_{u\in \mc{S}_T}q^{\ell(u)}.
\end{align}
Appealing to the $q$-hooklength formula for binary trees we get
\begin{align}
\label{eq:q-vol over trees}
V^q(\lambda)=\sum_{T\in \mc{T}_{r}} \vol(\cube_{\lambda}(T))\times q^{\mathrm{stat}(T)}\frac{\qfact{r}}{\prod_{v\in T}\qfact{h_v}},
\end{align}
where $\mathrm{stat}(T)$ is an easy-to-describe\footnote{It equals the sum over all internal nodes of the number of right edges in the unique path from root to node.} statistic on $\mc{T}_{r}$ that is not relevant for the discussion at hand.
Given the expression for $q$-volume in~\eqref{eq:q-vol over trees} as a sum over binary trees involving the hooklength formula, it is natural to compare it with \cite[Theorem 17.1]{Pos09} that expresses the ordinary volume of the permutahedron as a sum over binary trees as well. When we set $q=1$ and specialize to the case of the standard permutahedron, it can be checked that $\vol(\cube_{\lambda}(T))$ does not become the product on the right-hand expression in loc. cit..
In particular, our expansion does not yield Postnikov's hooklength formula \cite[Corollary 17.3]{Pos09} despite the similarity.
\end{remark}

\section{Combinatorial interpretation via bilabeled trees.}
\label{sec:A_c bilabeled}

We turn our attention to providing a combinatorial interpretation for $A_c(q)$ using the $q$-volume perspective coupled with work of \cite{HarHor18}. 
The overarching idea is the cubes $\cube_{\lambda}(u)$ are obtained by applying as images of certain special faces of the Gelfand--Tsetlin polytope $\gz(\lambda)$ under a simple volume-preserving map.

\subsection{Gelfand--Tsetlin polytope, face diagrams, and shifted tableaux}
\label{subsec:notation for hhmp}

The \emph{Gelfand--Tsetlin} polytope $\gz(\lambda) \subset \bR^{\binom{n}{2}}$ contains all points $(x_{ij})_{1< i\leq j\leq n}$ where
\begin{align}
\label{eq:conditions GZ}
x_{i,j-1}\geq x_{i,j} \geq x_{i+1,j}
\end{align}
holds for all $1\leq i<j\leq n$. Here we assume that $x_{ii}=\lambda_i$ for all $i\in [n]$.
Points in $\gz(\lambda)$ can be interpreted as fillings of a triangular array as shown in Figure~\ref{fig:GZ general} subject to conditions in~\eqref{eq:conditions GZ}.
\begin{figure}
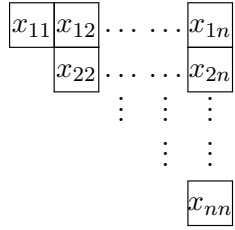

\ytableausetup{mathmode,boxsize=1.5em}
\begin{ytableau}
x_{11} & x_{12} & \none[\dots] & \none[\dots] & x_{1n}\\
\none & x_{22} & \none[\dots] & \none[\dots]  & x_{2n}\\
\none & \none & \none[\vdots]& \none[\vdots]& \none[\vdots]\\
\none & \none &  \none &\none[\vdots]& \none[\vdots]\\
\none & \none &  \none &\none & x_{nn}
\end{ytableau}
\caption{GT pattern}
\label{fig:GZ general}
\end{figure}

The authors of \cite{HarHor18} take inspiration from  work of Kogan \cite{Kog00}, Kiritchenko \cite{Kir10}, Kiritchenko--Smirnov--Timorin \cite{KST12}, and use a combinatorial gadget called \emph{face diagrams}
to emphasize equalities defining faces of $\gz(\lambda)$. 

Consider a set of $\binom{n+1}{2}$ vertices, each placed at the centre of the boxes in Figure~\ref{fig:GZ general}. We address the vertex in row $i$ from the top and column $j$ from the left by $(i,j)$.
Faces of $\gz(\lambda)$ of interest to us are determined by declaring at most one inequality among $x_{i,j} \leq x_{i,j+1}\leq x_{i+1,j+1}$ to be an equality.
Pictorially we emphasize this equality by drawing an edge between the appropriate vertices. The resulting graphs are called face diagrams.



The face diagrams we are interested in are indexed by permutations.
Pick $u\in \sgrp_{r}$ and let $(d_1,\dots,d_r)$ be such that $\code(u^{-1})=(d_1-1,\dots,d_{r}-1)$.
Consider the face $\face{u}$ of $\gz(\lambda)$ (and face diagram $\setfd(u)$)  defined by the equalities
\begin{align}
\label{eq:equalities for faces}
x_{i,i+j}=x_{i,i+j-1} \text{ for $1\leq i<d_j$ };  x_{i,i+j}=x_{i+1,i+j} \text{ for $d_j<i\leq r+1-j$.}
\end{align}
Then $\face{u}$ is $r$ dimensional.
We now observe that the association $\setfd(u)\leftrightarrow u$ is essentially the folklore bijection between decreasing binary trees and permutations described in Section~\ref{sub:cube_decomposition}.

Indeed, note the following aspect of $\setfd(u)$. 
For a fixed $j\in [r]$, there exists a unique $i\in [r+1-j]$ such that the vertex $(i,i+j)$ is  neither connected to the vertex immediately below nor connected to the vertex immediately to the left.
Indeed this vertex is given by $(d_j,d_j+j)$.
We \emph{enrich} $\setfd(u)$ by introducing edges joining $(d_j,d_j+j)$ to $(d_j+1,d_j+j)$ and $(d_j,d_j+j-1)$ for each $j\in [r]$. Additionally, we label the vertices $(d_j,d_j+j)$ by $j$.
The resulting graph is connected and has $\binom{n+1}{2}-1$ edges, and hence must be a tree (in a planar representation).
One can treat it as a rooted binary tree with root given by the vertex $(1,n)$. 
See Figure~\ref{fig:fd_to_tree} (middle) for the enriched face diagram for $u=2647351
\in\sgrp_7$.

Shrinking all paths present in the original $\setfd(u)$  to length $0$ we get a complete binary tree with $r$ labeled internal nodes and $r+1$ unlabeled leaves. Ignoring leaves, one obtains the decreasing tree $(T,\mathrm{dec})$ attached to $u$.\footnote{In the language of Section~\ref{sub:cube_decomposition} this is $\mathrm{T}(u)$.} Here $T$ records the unlabeled underlying tree and $\mathrm{dec}$  the decreasing labeling. See the decreasing tree on the right in Figure~\ref{fig:fd_to_tree}.

\begin{figure}
\includegraphics[scale=0.75]{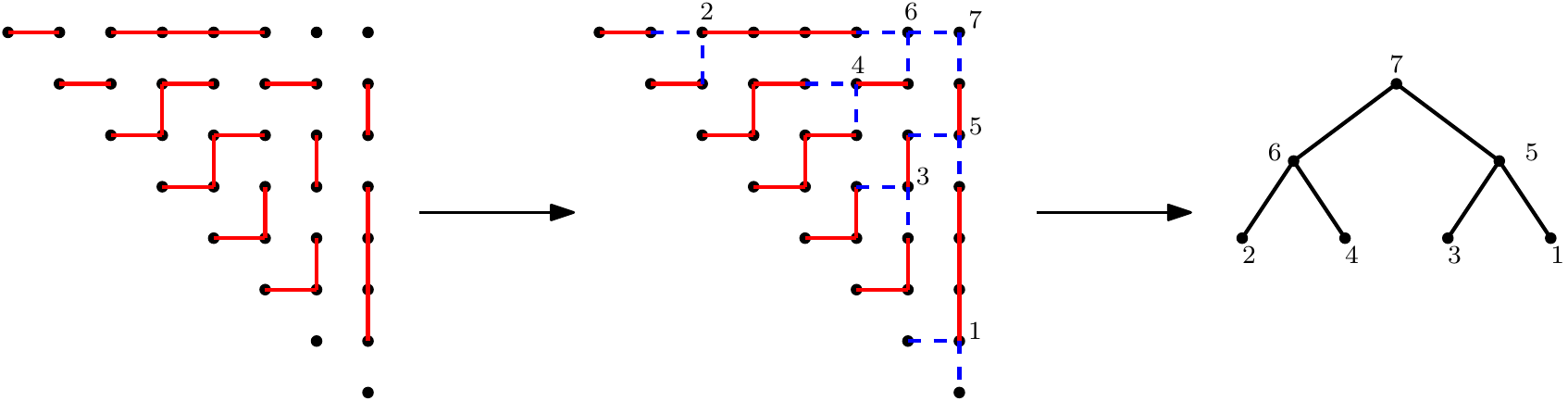}
\caption{$\mathrm{FD}(u)$ for $u=2647351
\in\sgrp_7$ (left) and the corresponding $(T,\mathrm{dec})$ (right).}
\label{fig:fd_to_tree}
\end{figure}

To give a combinatorial interpretation to $A_c(q)$, we endow a  decreasing tree (completed with unlabeled leaves) with an additional labeling $\mathrm{lr}$ of the nodes and leaves  with distinct positive integers drawn from $\{1,\dots,2r+1\}$ such that the label of any node is larger (resp. smaller) than that of its left (resp. right) child.
Furthermore, the labels on the leaves increase when read from left to right. 
We call the triple $(T,\mathrm{dec},\mathrm{lr})$ a \emph{bilabeled tree}. 
If the sequence of labels read off the leaves is $(1=\ell_1<\cdots < \ell_{r+1}=2r+1)$, we define the \emph{content} of the bilabeled tree by setting $c_i=\ell_{i+1}-\ell_i-1$ for $i\in [r]$.
 \begin{theorem}
 \label{th:remixed via trees}
 For $c\in \wc{r}{}$, let $B(c)$ be the set of bilabeled trees $(T,\mathrm{dec},\mathrm{lr})$ with content $c$.
 We have
 \[
 A_c(q)=\sum_{B(c)}q^{\ell(u)},
 \]
 where $u\in \sgrp_r$ is the permutation determined by $(T,\mathrm{dec})$.
 \end{theorem}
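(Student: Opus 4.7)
The idea is to combine the two expressions of the $q$-volume from Section~\ref{sec:volumes} and then unpack the geometric contribution of each individual cube $\cube_\lambda(u)$.

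Equating~\eqref{eq:vol_q_remixed} with Theorem~\ref{th:q-volume-interpretation} gives
\[
\sum_{c\in\wc{r}{}}A_c(q)\,\frac{\mu_1^{c_1}\cdots\mu_r^{c_r}}{c_1!\cdots c_r!}\;=\;\sum_{u\in\sgrp_r}q^{\ell(u)}\,\vol(\cube_\lambda(u)),
\]
where $\mu_i=\lambda_i-\lambda_{i+1}$. Since $q^{\ell(u)}$ is the only source of $q$-dependence on the right-hand side, comparing the coefficients of $\mu_1^{c_1}\cdots\mu_r^{c_r}/(c_1!\cdots c_r!)$ on both sides reduces the theorem to the $q$-free claim that, for every $u\in\sgrp_r$,
\[
c_1!\cdots c_r!\,\bigl[\mu_1^{c_1}\cdots\mu_r^{c_r}\bigr]\,\vol(\cube_\lambda(u))\;=\;\#\bigl\{\text{bilabelings }\mathrm{lr}\text{ of }\mathrm{T}(u)\text{ with content }c\bigr\}.
\]

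To prove this combinatorial volume formula I would invoke the HHMP framework recalled in Section~\ref{subsec:notation for hhmp}: the cube $\cube_\lambda(u)$ is realized as the image of the Gelfand--Tsetlin face $\face{u}$ under a volume-preserving piecewise-linear map, so $\vol(\cube_\lambda(u))=\vol(\face{u})$. The enriched face diagram $\setfd(u)$ has already been identified with the completed decreasing tree $(T,\mathrm{dec})$, and the inequalities defining $\face{u}$ translate into sandwich relations between GT coordinates sitting on adjacent vertices of this tree. Computing $\vol(\face{u})$ by iterated integration, processing one internal node of $T$ at a time, expands it as a sum of monomials $\mu_1^{c_1}\cdots\mu_r^{c_r}/(c_1!\cdots c_r!)$, one for each way of choosing labels in $\{1,\dots,2r+1\}$ on the vertices of $T$ that are consistent with these sandwich relations. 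After transporting through the face-diagram-to-tree correspondence, the constraints become exactly the BST-style condition of a bilabeling $\mathrm{lr}$, the left-to-right increasing order of leaf labels reflects the in-order traversal of $T$, and the exponents $c_i$ record the successive gaps between leaf labels, i.e.\ the content of the bilabeled tree.

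The main obstacle is performing this iterated integration cleanly and verifying the monomial-to-bilabeling correspondence: one must match the label introduced at each integration step with the correct vertex of $T$ and check that the content bookkeeping works out. This is essentially a $q$-free refinement of Liu's argument at $q=1$ in~\cite{Liu16}, the parameter $q$ entering only through Theorem~\ref{th:q-volume-interpretation}. Granting the formula, summing $q^{\ell(u)}$ times the count of bilabelings of $\mathrm{T}(u)$ over all $u\in\sgrp_r$ regroups as a single sum over $B(c)$ of $q^{\ell(u)}$, which is exactly the claim of the theorem.
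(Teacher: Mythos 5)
Your proposal follows essentially the same route as the paper: equate the two expressions for $V^q(\lambda)$, identify $\cube_\lambda(u)$ with the Gelfand--Tsetlin face $\face{u}$ via the HHMP correspondence, expand $\vol(\face{u})$ into monomials $\mu_1^{c_1}\cdots\mu_r^{c_r}/(c_1!\cdots c_r!)$ indexed by combinatorial objects, and translate those objects into bilabelings of $\mathrm{T}(u)$. The only difference is that the paper simply cites the shifted-tableau volume formula of Harada--Horiguchi et al.\ (their Proposition 3.1) where you propose to rederive it by iterated integration, so the argument is the same in substance.
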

 \begin{proof}
It is shown in \cite{HarHor18} that, under a simple transformation, the faces $\face{u}$ of $\gz(\lambda)$ are the $\cube_{\lambda}(u)$; see~\cite[Theorem 5.4]{HarHor18} and results in Section 6 in loc. cit..
Therefore $\vol(\face{u})=\vol(\cube_{\lambda}(u))$. The former quantity can be described in terms of \emph{shifted tableaux}, which we then recast.

A \emph{shifted Young tableau} $P$ associated with $\face{u}$ is a filling of the cells in Figure~\ref{fig:GZ general} with entries from $\{1,\dots,2r+1\}$ so that they increase
weakly from left to right and top to bottom, and neighboring entries are equal if and only if the corresponding vertices are connected by an edge in $\mathrm{FD}(u
)$. The \emph{diagonal vector} of $P$ is the sequence of entries in the cells $(i,i)$ as $i$ ranges from $1$ through $r+1$.

By \cite[Proposition 3.1]{HarHor18} that
\begin{align}
\vol(\face{u})=\sum_{c\in \wc{r}{}}\sum_{P\in \mathrm{ShT}(u,c)} \frac{(\lambda_1-\lambda_2)^{c_1}}{c_1!}\cdots\frac{(\lambda_r-\lambda_{r+1})^{c_r}}{c_r!},
\end{align}
where $\mathrm{ShT}(u,c)$ is the set of 
 shifted tableaux associated to $\face{u}$ with prescribed diagonal vector $(1,c_1+2,c_1+c_2+3,\dots,c_1+\dots+c_r+r+1)$.
Theorem~\ref{th:q-volume-interpretation} then becomes
\begin{align}
V^q(\lambda)=\sum_{u\in \sgrp_r}q^{\ell(u)} \sum_{c\in \wc{r}{}}\sum_{P\in \mathrm{ShT}(u,c)} \frac{(\lambda_1-\lambda_2)^{c_1}}{c_1!}\cdots\frac{(\lambda_r-\lambda_{r+1})^{c_r}}{c_r!}.
\end{align}
Comparing with~\eqref{eq:vol_q_remixed}  yields the following interpretation:
\begin{align}
 A_c(q)=\sum_{u\in \sgrp_r}\sum_{P\in\mathrm{ShT}(u,c)} q^{\ell(u)}.
\end{align} 
Mimicking how we went from $\mathrm{FD}(u)$ to $(T,\mathrm{dec})$, we can translate shifted tableaux to bilabeled trees\textemdash{} the inequalities defining the former translate into the `local binary search' condition on the labeling $\mathrm{lr}$ while  the diagonal vector gives the leaf labeling.
\end{proof}
 
\begin{example}
Consider $c=(2,0,1)\in \wc{3}{} $. Figure~\ref{fig:bilabeled_201} shows all bilabeled trees with content $c$. The blue labels on the outside record the $\mathrm{lr}$ labeling whereas the interior labels record the decreasing labeling.
\begin{figure}
\includegraphics[scale=0.85]{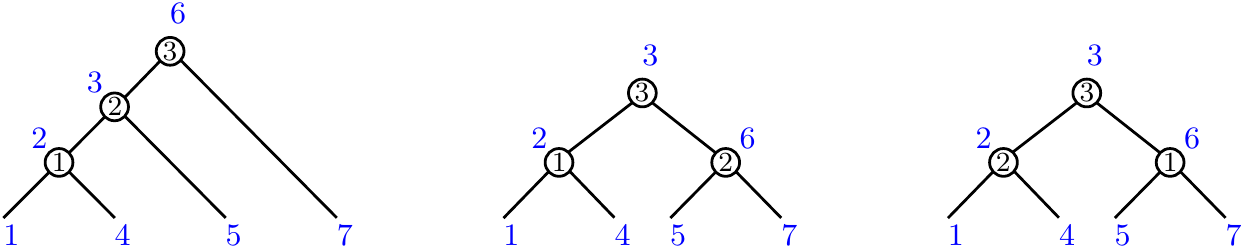}
\caption{All bilabeled trees with content $(2,0,1)$.}
\label{fig:bilabeled_201}
\end{figure}
We get
\begin{align*}
A_{201}(q)=q^{\ell(123)}+q^{\ell(132)}+q^{\ell(231)}=1+q+q^2.
\end{align*}
\end{example}

\subsection{Connection with Liu's work}
\label{sub:liu}
 
We now demonstrate how the bilabeled tree construction  is related to work of Liu \cite{Liu16}.

Given a bilabeled tree $(T,\mathrm{dec},\mathrm{lr})$, consider the node $v_1$ labeled $1$ in the decreasing labeling; its children are leaves, say with labels $\ell_i<\ell_{i+1}$ in the labeling $\mathrm{lr}$. 
Then we must have $c_i=\ell_{i+1}-\ell_i-1$.  
Let $j$ be the $\mathrm{lr}$-label of $v_1$.
The local binary search condition tells us that $\ell_i<j<\ell_{i+1}$. 

If $i=1$, then necessarily $j=2$ since $v_1$ is the only possible node with this label; similarly if $i=r$, then $j=2r$. 
We obtain a bilabeled tree $(T',\mathrm{dec}',\mathrm{lr}')$ as follows:
  \begin{itemize}
  \item $T'$ is obtained by replacing the node $v_1$ and its two attached leaves by a single new leaf $leaf$.
  \item $\mathrm{dec}'$ is obtained by decreasing $\mathrm{dec}$ by one on the remaining nodes in $T'$.
  \item $\mathrm{lr}'$ is obtained by labeling the new leaf $leaf$ by $j$, and then relabeling via the unique increasing bijection $\{1,\dots,2r+1\}\setminus\{\ell_i,\ell_{i+1}\}\to\{1,\dots,2r-1\}$.
  \end{itemize}
  Figure~\ref{fig:liu_shrink} shows an example of this procedure.
  
  Note that the content of $(T',\mathrm{dec}',\mathrm{lr}')$ is $(c_1,\dots,c_{i-2},c_{i-1}+(j-l_{i}-1),c_{i+1}+(l_{i+1}-j-1),c_{i+2},\dots,c_r)$. Also, all bilabeled trees with this content are obtained in the manner described above. 
\begin{figure}
  \includegraphics[scale=0.8]{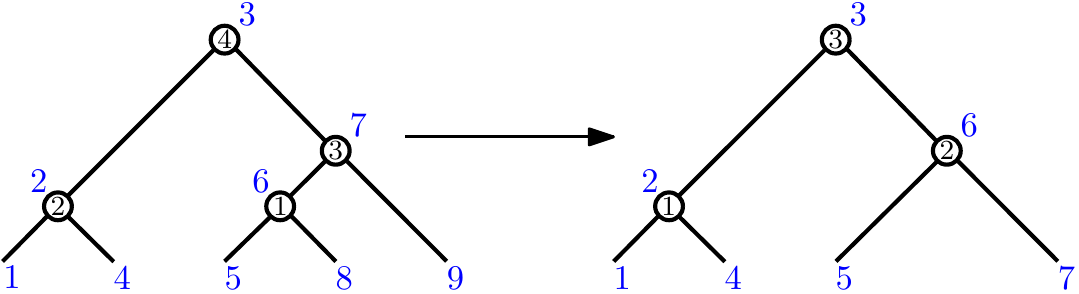}
  \caption{An example of $(T,\mathrm{dec},\mathrm{lr})\to (T',\mathrm{dec}',\mathrm{lr}') $.}
  \label{fig:liu_shrink}
  \end{figure}
 We thus get
 \begin{align}
 \label{eq:liu_recurrence}
 A_c=A_{(c_1+c_2-1,c_3,\dots,c_r)}+A_{(c_1,\dots,c_{r-2},c_{r-1}+c_r-1)}
 +\sum_{i=2}^{r-1}\sum_{t=0}^{c_i-1}A_{(c_1,\dots,c_{i-2},c_{i-1}+t,c_{i+1}+c_i-1-t,c_{i+2}\dots,c_r)},
  \end{align}
which is precisely Liu's recurrence \cite[p. 8]{Liu16}.
Using the same approach, one can in fact obtain a bijection between bilabeled trees with content $c$ and  Liu's $c$-permutations.

\section{Concluding remarks}
We conclude this article with some brief remarks that further demonstrate the combinatorial richness of the $A_c$ and hopefully motivate the reader to investigate.

\begin{enumerate}
\item The statement of Theorem~\ref{th:q_list}\ref{it4} has the mildly unattractive aspect in that both the ordinary factorial and the $q$-factorial show up. In fact, appealing to Theorem~\ref{th:q_list}\ref{it7}, one can establish another $q$-analogue:
\begin{align}
\label{eq:pfr}
\sum_{c\in \wc{r}{}}\frac{A_{c_1,\dots,c_{r}}(q)}{\qfact{c_1}\cdots \qfact{c_r}}=\sum_{\substack{c\in \wc{r}{}\\\forall i\leq r,c_1+\cdots+c_i\geq i}}\frac{\qfact{r}}{\qfact{c_1}\cdots \qfact{c_r}}=\sum_{f \in \mathrm{PF}(r)}q^{\inv(f)}.
\end{align}
Here $\mathrm{PF}(r)$ is the set of parking functions of length $r$. 
The first equality in~\eqref{eq:pfr} is simply invoking Theorem~\ref{th:q_list}\ref{it7}, whereas the second follows since the $q$-multinomial coefficient $\frac{\qfact{r}}{\qfact{c_1}\cdots \qfact{c_r}}$ for $c\in \wc{r}{}$ satisfying $c_1+\cdots+c_i\geq i$ for all $i\leq r$ tracks the inversion (equivalently the major index) statistic over parking functions with content $c$. Note that each individual summand on the left-hand side in~\eqref{eq:pfr} is not necessarily a polynomial in $q$. Furthermore, one could argue that the appearance of parking functions in the last step is contrived.\\ We say more on this matter in upcoming work that describes another (multivariate) perspective on the $A_c(q)$ wherein parking procedures come up organically. In that same work the combinatorial interpretation for $A_c(q)$ as counting bilabeled trees, obtained in Sections~\ref{sec:volumes} and \ref{sec:A_c bilabeled} via geometric means, will be derived purely algebraically.

\item
As mentioned earlier, the authors' inspiration to introduce, and study, the $A_c(q)$ came from Schubert calculus \cite{NT20,NT21}. In particular, the Schubert class expansion of the cohomology class of the type $A$ permutahedral variety naturally involves mixed Eulerian numbers once the connection between ordinary divided symmetrization and coefficient extraction Klyachko's algebra \cite{Kly85} is made. 
Since the latter algebra also arises as the cohomology ring of a regular nilpotent Hessenberg variety known as the \emph{Peterson variety}, it is not surprising that mixed Eulerian numbers arise in that context.\\
We make note of their occurrence in two recent works. 
Goldin--Gorbutt~\cite{GoGo20} compute structure coefficients in the \emph{Peterson Schubert basis} and give explicit expression for them in certain cases. In particular, Corollary 2 in loc. cit. describes mixed Eulerian numbers $A_c(1)$ where $c$ is a composition of the form $(0^p1^q2^r1^s0^t)$ for the appropriate $p,q,r,s,$ and $t$, even though these numbers are not explicitly identified as mixed Eulerian numbers in said work.
More generally, several structural constants in their work are products of $A_c$ where the $c$ satisfy $c_i\leq 2$. The type $A$ story in this context is also present in~\cite{Hor21}.

\item Finally, other subfamilies of $A_c(q)$ of interest are currently being investigated by Solal Gaudin, a student of the first author, as part of his thesis: as an example, weak compositions $c$ whose support has size two, that is particle configurations with two piles. Note that these are not in general elements of the two subfamilies studied in Section~\ref{sec:examples}. 

\end{enumerate}

\section*{Acknowledgements}
V. T. is grateful to Lauren Williams for helpful correspondence concerning Bruhat interval~polytopes. We are also grateful to Jang Soo Kim for several comments on the manuscript and for enlightening discussions.
\bibliographystyle{hplain}
\bibliography{Biblio_remixed}

\end{document}